\newtheorem{df}{Definition}[section]
\newtheorem{thm}[df]{Theorem}
\newtheorem{rem}[df]{Remark}
\newtheorem{lem}[df]{Lemma}
\newcommand{\ga}{\gamma}
\newcommand{\mb}{\mathbb}
\newcommand{\ra}{\rightarrow}
\newcommand{\pr}{^{\prime}}
\newcommand{\eq}{eqnarray*}
\newcommand{\ep}{\epsilon}
\newcommand{\tri}{\triangle}
\newcommand{\hyp}{hyperbolic }
\title{A Remark on the Rigidity of Delaunay Triangulated Plane}
\author{Song Dai\textsuperscript{1}}
\address{Song Dai\\
Center for Applied Mathematics\\
Tianjin University\\
No.92 Weijinlu Nankai District\\
Tianjin\\
P.R.China 300072}
\email{song.dai@tju.edu.cn}
\date{}
\begin{document}

\begin{abstract}
In \cite{Wu22}, under the uniformly acute condition, Wu showed the rigidity of the geodesic triangulated plane under Luo's discrete conformality. In this article, by modifying Wu's proof, we improve this result by weakening the uniformly acute condition to the uniformly Delaunay condition.
\end{abstract}

\maketitle

\section{Introduction}
Let $T=(V,E,F)$ be a triangulation of a surface $S$ with or without boundary. Denote $|T|$ as the underlying space of the complex $T$. A PL (piecewise linear) metric on $T$ is a function, $l:E\rightarrow \mb{R}_+$ such that for every $ijk\in F$, $ijk$ forms a Euclidean triangle under the length $l$, denoted as $\triangle ijk$.
In \cite{Luo04}, Luo introduced the notion of the discrete conformality. Let $l,l\pr$ be two PL metrics on $T$. We call $l$ is discrete conformal to $l\pr$ if there exists a function $u: V\ra \mb{R}$ such that $$l_{ij}=e^{(u_i+u_j)/2}l\pr_{ij},$$
denoted as $l\pr=u*l$. The function $u$ is called the conformal factor.

In recent years, the theory of Luo's discrete conformality has developed in many directions, such as the prescribed curvature problems, the rigidity, the convergence, the numerical methods in computing conformal geometry and so on. One may refer to \cite{BPS15,CCSZPZ21,DGM22,GGLSW18,GLSW18,GLW19,GSC21,LSW20,LW19,LWZ21a,Spr19,SWGL15,WGS15,Wu14,Wu22,WX21,WZ20}. In this article, we focus on the rigidity of the triangulated plane.

For a triangle $\triangle ijk$, denote $\theta_{jk}^i$ as the angle of $\angle jik$. A PL metric $l$ is called\\
(1) uniformly nondegenerate if there is a constant $\ep>0$ such that $\theta^i_{jk}\geq \ep$ for every triangle $\triangle ijk\in F$;\\
(2) Delaunay if $\theta^k_{ij}+\theta^l_{ij}\leq \pi$ for every adjacent triangles $\tri ikj,\tri ilj\in F$ sharing the edge $ij\in E$;\\
(3) uniformly Delaunay if there is a constant $\ep>0$ such that $\theta^k_{ij}+\theta^l_{ij}\leq \pi-\ep$ for every adjacent triangles $\tri ikj,\tri ilj\in F$ sharing the edge $ij\in E$;\\
(4) uniformly acute if there is a constant $\ep>0$ such that $\theta^i_{jk}\leq \frac{\pi}{2}-\ep$ for every triangle $\triangle ijk\in F$.
\begin{rem}
The Delaunay condition is equivalent to that for every adjacent triangles $\tri ijk,\tri ijl\in F$ isometric embedding in $\mb{C}$, then $l\notin \text{int}(C)$, where $\text{int}(C)$ is the interior of the circumscribed circle of $\tri ijk$.
\end{rem}
\begin{rem}
It is clear that under the uniformly nondegenerate condition with constant $\ep>0$, $\deg i$ are uniformly bounded, $\deg i\leq \frac{2\pi}{\ep}, \forall i\in V.$
\end{rem}

Denote $|T|^o$ and $\partial |T|$ be the interior and the boundary of $|T|$ respectively. Denote $V^o=V\cap |T|^o$, $\partial V=V\cap \partial |T|$. Let $l$ be a PL metric on $T$. For $i\in V^o$, denote $R_i$ as the $1$-ring neighborhood of $i$, that is the subcomplex generated by $i$ and $j$ for every $ij\in E$. We abuse the notation $R_i$ also as the underlying space $|R_i|$ and the set of vertices $V(R_i)$.
The curvature at $i$ is defined as
$$K_i=2\pi-\sum\limits_{\tri ijk\in F}\theta^i_{jk},$$ which only depends on the restriction of $l$ on $R_i$.
Let $l\pr=u*l$ be also a PL metric discrete conformal to $l$. Then $K_i(u):\mb{R}^{\deg i+1}\ra \mb{R}$ is smooth with respect to $u$. From the direct calculation or \cite{Luo04}, one have
$$dK_i=-\sum\limits_{j:ij\in E}\eta_{ij}(du_j-du_i),$$
where $$\eta_{ij}=\eta_{ij}(u)=\frac{1}{2}(\cot \theta^{k}_{ij}(u)+\cot \theta^{l}_{ij}(u))$$
for adjacent triangles $\tri kij,\tri lij\in F$ sharing the edge $ij\in E$.
It is clear that under the Delaunay condition $\eta\geq 0$, and under the uniformly Delaunay condition $\eta\geq \ep>0.$ We call $l$ is flat if $K_i=0$ for every $i\in V$.
\begin{rem}
For $i\in\partial V$, the curvature is defined as $K_i=\pi-\sum\limits_{jk:\tri ijk\in F}\theta^i_{jk}.$ It is clear that $K$ is invariant under dilation, i.e. $u$ being a constant.
\end{rem}

A map $\phi:|T|\ra \mb{C}$ is called a geodesic embedding if for every $ij\in E$, $\phi$ maps $ij$ to a segment connecting $\phi(i)$ and $\phi(j)$, and $\phi$ maps $|T|$ homeomorphically to its image. If further $\phi$ is surjective, we call $\phi$ is a geodesic homeomorphism or a geodesic triangulated plane. It is clear that a geodesic embedding $\phi$ gives a flat PL metric $l(\phi)$, or $l$ for short, by using the Euclidean distance.

Let $\phi,\phi\pr$ be two geodesic triangulated planes with PL metric $l,l\pr$. Suppose $l,l\pr$ are discrete conformal, $l\pr=u*l$. The conformal factor $u$ being a constant is clearly a solution. The rigidity problem is that whether $u$ must be a constant. In \cite{WGS15}, Wu-Gu-Sun first gave an affirmative answer to this problem under the condition $T$ being the standard hexagonal triangulation of the plane with $l\equiv 1$ and $l\pr$ satisfying the uniformly acute condition. They also gave a counter-example for the case $l\pr$ being flat but not a geodesic triangulated plane. In \cite{LSW20} Luo-Sun-Wu improved the result above by showing that the rigidity of the standard hexagonal geodesic triangulated plane holds under the condition $l\pr$ satisfying the Delaunay condition. Dai-Ge-Ma also showed this result in \cite{DGM22} independently. Recently, Wu in \cite{Wu22} made a breakthrough to release the special case of the geodesic triangulated plane to general cases under certain conditions.
\begin{thm}\label{main acute}(Theorem 1.2 in \cite{Wu22})
Let $T$ be a triangulated plane. Let $\phi:|T|\ra \mb{C}$ be a geodesic homeomorphism with the induced PL metric $l$. Let $\phi\pr:|T|\ra \mb{C}$ be a geodesic embedding with the induced PL metric $l\pr$. Suppose both $l,l\pr$ satisfy the uniformly nondegenerate condition and the uniformly acute condition. If $l$ and $l\pr$ are discrete conformal, $l\pr=u*l$, then $u$ is a constant.
\end{thm}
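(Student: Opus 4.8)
The plan is to deduce from the hypotheses that $u$ is a discrete harmonic function on $T$ for a positive weight, and then to run a Liouville-type argument on the triangulated plane. Since both the curvature and the conclusion are unchanged by adding a constant to $u$, normalize $u(v_0)=0$ at a base vertex and aim to prove $u\equiv 0$. Consider the linear homotopy $u(t)=tu$, $t\in[0,1]$, with conformal metrics $l^{(t)}=u(t)*l$, so that $l^{(0)}=l$ and $l^{(1)}=l'$; both are flat and every vertex of a triangulated plane is interior, so $K_i(0)=K_i(1)=0$ for all $i\in V$. Because each $K_i$ depends on only finitely many $u_j$, integrating $dK_i=-\sum_{j:ij\in E}\eta_{ij}(du_j-du_i)$ over $t\in[0,1]$ (the fundamental theorem of calculus applied to $t\mapsto K_i(tu)$) gives, for every $i$,
\[
0=K_i(1)-K_i(0)=-\sum_{j:ij\in E}\bar\eta_{ij}\,(u_j-u_i),\qquad \bar\eta_{ij}=\int_0^1\eta_{ij}(u(t))\,dt,
\]
so $u$ is discrete harmonic for the weights $\bar\eta_{ij}$. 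This is the step where the uniformly acute hypothesis is used decisively: it lets one verify that along the homotopy the triangles stay uniformly nondegenerate and the metrics stay Delaunay (possibly after finitely many Delaunay edge-flips in each bounded region, which do not affect the conclusion since the cotangent sums vary continuously across a flip), so that $K_i(u(t))$ is smooth in $t$, $\eta_{ij}(u(t))\ge 0$ throughout, and, since $\eta_{ij}(u(0))>0$, one gets $0<c\le\bar\eta_{ij}\le C<\infty$ for every edge with $c,C$ depending only on $\epsilon$; the uniformly nondegenerate condition also keeps all vertex degrees bounded. (Reworking exactly this step, so as to control the homotopy under only the uniformly Delaunay condition, is the content of the present paper.)

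Next I would show the weighted network $(T,\bar\eta)$ is recurrent; this is not implied by bounded degree alone, but it follows from the geometric type of $T$. The flat metric $l$ realizes $|T|$ as the whole Euclidean plane through $\phi$, so, comparing vertex extremal length in $T$ with extremal length in the plane — the comparison being uniform because of the angle bound — $T$ has infinite extremal length to infinity and is therefore recurrent; since $\bar\eta$ is pinched between positive constants, $(T,\bar\eta)$ has a Dirichlet form comparable to the unit-weight one and is recurrent as well. Now one invokes the standard fact that on a recurrent network a harmonic function bounded on one side is constant: $u(X_n)$ is then a martingale bounded on one side, hence a.s.\ convergent, while the recurrent walk visits every vertex infinitely often, forcing $u$ to be constant. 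Thus the theorem reduces to an a priori bound: $u$ is bounded from below (equivalently, from above) on $V$.

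This a priori bound is the crux and the main obstacle, because an embedded geodesic triangulation can carry triangles of wildly varying size, so no direct area or diameter comparison bounds $u$. The route I would take is to first promote the control on $u$ to a uniform Lipschitz bound $|u_i-u_j|\le M(\epsilon)$ on every edge, by a ring-lemma / Harnack argument using that around every interior vertex the link is embedded for both $l$ and $l'$ and all angles lie in $[\epsilon,\tfrac{\pi}{2}-\epsilon]$. With $u$ uniformly Lipschitz, the piecewise-affine homeomorphism $\Phi=\phi'\circ\phi^{-1}\colon\mb{C}\to\phi'(|T|)$ has uniformly bounded dilatation, hence is quasiconformal, and since $\mb{C}$ is conformally parabolic this forces $\phi'(|T|)=\mb{C}$; thus $\phi'$ is itself a geodesic homeomorphism of the whole plane and the situation becomes symmetric in $\phi$ and $\phi'$ (with $-u$ the conformal factor from $l'$ to $l$). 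One is then left with a rigidity statement for two geodesic triangulated planes sharing the combinatorics $T$ — an analogue of the Rodin--Sullivan and He--Schramm rigidity for circle packings that fill $\mb{C}$ — which cannot be closed from harmonicity, the Lipschitz bound and recurrence alone (on $\mb{Z}^2$ the coordinate functions are Lipschitz and harmonic). The remaining input is the discrete conformality itself, used symmetrically: exhausting by combinatorial disks $D_n$, one bounds the oscillation of $u$ over $D_n$ by its oscillation over $\partial D_n$ through the structure of the curvature energy, and a limiting argument then forces $u\equiv 0$. Making this last step precise, within the framework of \cite{Wu22}, is the technical heart of the proof.
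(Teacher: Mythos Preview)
Your approach diverges substantially from the paper's (and Wu's) and contains a genuine gap at the very first step. You integrate $dK_i$ along the linear path $u(t)=tu$ and assert that the uniformly acute hypothesis ``lets one verify that along the homotopy the triangles stay uniformly nondegenerate and the metrics stay Delaunay.'' This is not justified and is, in general, false: the set of conformal factors $u$ for which $u*l$ satisfies the triangle inequalities is \emph{not} convex, so $l^{(t)}=(tu)*l$ can leave the space of PL metrics even when both endpoints are uniformly acute; and even when $l^{(t)}$ is a PL metric, the Delaunay condition can fail at intermediate $t$. Your proposed fix of performing Delaunay flips does not help, since a flip changes the edge set $E$, so the identity $\sum_{j:ij\in E}\bar\eta_{ij}(u_j-u_i)=0$ is no longer an equation on the original triangulation $T$, and the cotangent weights are \emph{not} continuous across a flip in the sense needed here (the two cotangents attached to the flipped edge are replaced by two entirely different ones). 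Consequently you have not produced positive edge weights $\bar\eta_{ij}$ for which $u$ is discrete harmonic on $T$, and the Liouville/recurrence machinery never gets started. You also correctly flag, but do not close, the a priori one-sided bound on $u$; your quasiconformal route to surjectivity of $\phi'$ presupposes the uniform Lipschitz bound on $u$, which in turn presupposes control you do not yet have, and even after symmetrizing you acknowledge the argument is incomplete.

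For comparison, the paper (following Wu) does \emph{not} deform along the full segment from $l$ to $l'$. The proof is in two independent halves. First, boundedness of $u$ is obtained without any homotopy: one transplants the $1$-ring neighborhoods into the Poincar\'e disc, uses a \emph{hyperbolic} minimum principle (Lemma~\ref{hmp}) together with the embedding lemma (Lemma~\ref{hyp emb}) to get the key scale estimate $u_i\ge\log(r'/r)-M(\epsilon)$ of Lemma~\ref{key estimate}, and then combines this with a conformal-modulus argument to conclude $u$ is bounded (Lemma~\ref{boundedness}). Second, ``bounded $\Rightarrow$ constant'' (Lemma~\ref{bounded to rigid}) is Wu's Section~3.1; there the uniform acuteness is used only to guarantee that a \emph{small} perturbation of $l$ remains Delaunay (which is why the present paper can replace it by the uniformly Delaunay condition), not to control the entire path from $l$ to $l'$. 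So the role you assign to the acute hypothesis---stabilizing a global linear homotopy---is not the role it actually plays.
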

In this article, we modify Wu's proof in \cite{Wu22} to improve this result by weakening the uniformly acute condition to the uniformly Delaunay condition.
\begin{thm}\label{main Delaunay}
Let $T$ be a triangulated plane. Let $\phi:|T|\ra \mb{C}$ be a geodesic homeomorphism with the induced PL metric $l$. Suppose $l$ satisfies the uniformly nondegenerate condition and the uniformly Delaunay condition. Let $\phi\pr:|T|\ra \mb{C}$ be a geodesic embedding with the induced PL metric $l\pr$. Suppose $l\pr$ satisfies the uniformly nondegenerate condition and the Delaunay condition. If $l$ and $l\pr$ are discrete conformal, $l\pr=u*l$, then $u$ is a constant.
\end{thm}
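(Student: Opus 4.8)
The plan is to follow the maximum-principle strategy that underlies Wu's proof of Theorem \ref{main acute}, but to replace every place where acuteness is used by an argument that only needs the Delaunay condition. The starting point is the observation that, since both $\phi$ and $\phi\pr$ are flat ($K_i=0$ and $K_i(u)=0$ for all $i$), the conformal factor $u$ satisfies a discrete elliptic equation: for each interior vertex $i$,
\begin{equation*}
0 = K_i(u)-K_i = -\int_0^1 \sum_{j:ij\in E}\eta_{ij}(tu)\,(u_j-u_i)\,dt,
\end{equation*}
so writing $w_{ij}=\int_0^1 \eta_{ij}(tu)\,dt\geq 0$ (here nonnegativity uses that along the segment from $l$ to $l\pr$ one stays in the Delaunay region — this is one of the points needing care, and is exactly where the hypothesis that \emph{both} endpoints are Delaunay, with one of them uniformly Delaunay, enters) we get $\sum_j w_{ij}(u_j-u_i)=0$. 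Thus $u$ is a (weighted) discrete harmonic function on the infinite graph $T^{(1)}$, with nonnegative weights that are uniformly bounded above (because both metrics are uniformly nondegenerate, so the cotangents are bounded) and uniformly bounded below on a cofinite set of edges by the uniformly Delaunay constant. The conclusion $u\equiv\text{const}$ would then follow from a Liouville-type theorem for harmonic functions on such graphs, provided one has enough geometric control — a bounded-geometry / quasi-isometry-to-$\mb{R}^2$ statement — which is where the genuine work lies.

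The key steps, in order, would be: (1) Set up the integrated equation above and verify $w_{ij}\geq 0$ along the whole path, and $w_{ij}\geq \ep\pr>0$ for all but possibly the edges where $l\pr$ is only Delaunay (not uniformly) — more precisely show $\eta_{ij}(u)+\eta_{ij}(0)$-type lower bounds survive, or alternatively argue that the \emph{average} weight is bounded below. (2) Establish that the geodesic homeomorphism $\phi$ realizes $|T|$ with uniformly nondegenerate, uniformly Delaunay metric $l$, so that $T^{(1)}$ with metric $l$ is quasi-isometric to the Euclidean plane and has bounded vertex degrees (Remark after the definitions); this gives volume growth $\sim r^2$ for balls. (3) Transfer this bounded geometry to the weighted Laplacian: the weights $w_{ij}$ being two-sided bounded (up to the exceptional edges) means the weighted graph is parabolic/recurrent with the volume doubling and Poincaré inequality needed to run a Moser iteration or a direct telescoping argument. (4) Apply a discrete Liouville theorem: a harmonic function of sublinear — in fact any — growth that is merely bounded below by its harmonicity on a recurrent graph, or one controlled via $\phi\pr$ being an embedding (which forces $u$ not to blow up too wildly, since the image is a locally finite geodesic triangulation), must be constant. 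Concretely, I expect the cleanest route is to show $u$ is bounded — or has controlled oscillation on annuli — using that $\phi\pr$ is a geodesic \emph{embedding} (injectivity rules out the collapsing/folding that unbounded $u$ would create), and then invoke recurrence of the weighted graph to kill a bounded harmonic function.

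The main obstacle I anticipate is precisely the interaction between the two weaker hypotheses on $l\pr$: we only assume $l\pr$ is Delaunay (not uniformly) and uniformly nondegenerate, whereas $l$ is uniformly Delaunay. Along the linear interpolation $tu$ the intermediate metrics need not be Delaunay at the non-uniform edges, so $\eta_{ij}(tu)$ could dip negative on a subinterval; handling this requires either a smarter homotopy (e.g. the one Wu uses, possibly interpolating lengths rather than conformal factors, or using the convexity of the relevant energy) or a localized argument showing such bad edges are sparse and do not destroy the maximum principle. A secondary obstacle is proving the Liouville statement with the exceptional (only-Delaunay, weight possibly $0$) edges present: one must ensure the graph does not get disconnected in the limit, which should follow from $l$ being uniformly Delaunay on \emph{its} side together with the combinatorics being fixed, but the bookkeeping is delicate. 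I would expect the bulk of the paper's novelty — and the part where "modifying Wu's proof" is doing real work — to be concentrated in getting the sign and lower bound on the averaged weights $w_{ij}$ purely from the one-sided uniformly-Delaunay assumption, after which the Liouville/bounded-geometry machinery goes through essentially as in \cite{Wu22} and \cite{LSW20}.
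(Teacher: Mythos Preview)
Your two-step outline (first bound $u$, then use a Liouville-type argument to force $u\equiv\text{const}$) matches the paper's overall architecture, but you have inverted the difficulty of the two steps and are missing the main new idea.

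\textbf{Where the gap is.} The step you treat most casually --- ``show $u$ is bounded using that $\phi\pr$ is a geodesic embedding (injectivity rules out the collapsing/folding that unbounded $u$ would create)'' --- is in fact the entire substance of the paper's modification of Wu's proof, and injectivity alone does not give it. The paper obtains boundedness via \emph{hyperbolic} discrete conformality: one rescales so that $\phi,\phi\pr$ land in the unit disc, views $D$ as the Poincar\'e model $\mb{D}$, and compares the hyperbolic conformal factor $u^h_i=u_i+\log\frac{1-|z_i|^2}{1-|z_i\pr|^2}$ to zero using a hyperbolic \emph{minimum} principle (Lemma~\ref{hmp}). The obstruction is that a Euclidean geodesic embedding into $D$ need not induce a hyperbolic geodesic embedding into $\mb{D}$ (the hyperbolic geodesic through $z_0,z_k$ can separate $z_{k+1}$ from where it should be). Wu handled this with the acute condition; the paper's new input (Lemma~\ref{hyp emb}) is a purely metric criterion, $l_{ij}<(1-|\phi(i)|^2)\sin\ep$, under which the induced hyperbolic map is still an embedding. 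This criterion, plus a case analysis at the minimizing vertex (either the criterion holds and the hyperbolic maximum principle applies, or an edge is long relative to $1-|z\pr_{i_0}|^2$ and a direct computation using the gradient estimate of Lemma~\ref{grad estimate} gives $u^h_{i_0}\geq 0$), yields the key estimate Lemma~\ref{key estimate}; Wu's conformal-modulus argument then gives boundedness. None of this hyperbolic machinery appears in your plan.

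\textbf{Where you over-invest.} Conversely, the part you flag as the ``bulk of the novelty'' --- controlling the averaged weights $w_{ij}=\int_0^1\eta_{ij}(tu)\,dt$ along a straight-line homotopy --- is \emph{not} what the paper does for the bounded-implies-constant step. The paper simply observes (Lemma~\ref{bounded to rigid}) that in Wu's Section~3.1 the uniformly acute hypothesis is used only to guarantee that a \emph{small} perturbation of $l$ remains Delaunay, and uniformly Delaunay is already sufficient for that; no full interpolation from $l$ to $l\pr$ is needed, so your worry about $\eta_{ij}(tu)$ dipping negative on a subinterval never arises. Your proposed direct Liouville route via $w_{ij}$ might be made to work, but it is neither the paper's argument nor easier, and the obstacle you identify is real for that route.

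In short: you have the right decomposition but the wrong emphasis. The missing ingredient is the hyperbolic-embedding criterion (Lemma~\ref{hyp emb}) and the hyperbolic minimum principle that together replace acuteness in the boundedness proof.
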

\begin{rem}
In \cite{BPS15}, Bobenko-Pinkall-Springborn observed the relation between Luo's discrete conformality and the \hyp polyhedra in $\mb{H}^3$. In fact the rigidity problem in this article corresponds to the Cauchy rigidity of the ideal \hyp polyhedra and the Delaunay condition corresponds to the convexity of the \hyp polyhedra. One may also refer to \cite{DGM22}.
\end{rem}
For the study of geodesic triangulations of other surfaces, one may refer to \cite{Luo22,LWZ21b,LWZ21c,LWZ22,LZ22}.

The article is organized as follows. In Section \ref{hyperbolic}, we study the hyperbolic discrete conformality. In Section \ref{maximum}, we show some useful maximum principles in the discrete conformal geometry. In Section \ref{modification}, we modify Wu's proof to show the main result Theorem \ref{main Delaunay}.

\noindent\textbf{Acknowledgement:} The author would like to thank Tianqi Wu and Huabin Ge. The author is supported by NSF of China (No.11871283, No.11971244 and No.12071338).
%\section{Preliminaries}
\begin{comment}\section{PL metric and discrete conformality}
Let $T=(V,E,F)$ be a triangulation of a surface $S$ with or without boundary. We assume $\deg i$ is finite for every $i\in V$.
We mainly focus on $S$ being an open disc or a closed disc in $\mb{C}$. Denote $|T|$ as the underlying space of the complex $T$. A PL (piecewise linear) metric on $T$ is a function, $l:E\rightarrow \mb{R}_+$ such that for every $ijk\in F$, $ijk$ forms a Euclidean triangle under the length $l$, denoted as $\triangle ijk$. %A PL metric induces a distance structure on $|T|$.

In \cite{Luo04}, Luo introduced the notion of discrete conformality. Let $l,l\pr$ be two PL metrics on $T$. We call $l$ is discrete conformal to $l\pr$ if there exists a function $u: V\ra \mb{R}$ such that $$l_{ij}=e^{(u_i+u_j)/2}l\pr_{ij},$$
denoted as $l\pr=u*l$. The function $u$ is called the conformal factor.
\end{comment}

\section{Hyperbolic discrete conformality}\label{hyperbolic}
The notion of the hyperbolic discrete conformality was first introduced by Bobenko-Pinkall-Springborn in \cite{BPS15}. A hyperbolic PL metric on $T$ is a function, $l^h:E\rightarrow \mb{R}_+$ such that for every $ijk\in F$, $ijk$ forms a hyperbolic triangle under the length $l^h$. Let $l^h,l^{h\prime}$ be two hyperbolic PL metrics on $T$. We call $l^h$ is \hyp discrete conformal to $l^{h\prime}$ if there exists a function $u^h: V\ra \mb{R}$ such that $$\sinh \frac{l^h_{ij}}{2}=e^{(u^h_i+u^h_j)/2}\sinh \frac{l^{h\prime}_{ij}}{2},$$ denoted as $l^{h\prime}=u^h*^hl^h$. The function $u^h$ is called the \hyp conformal factor.
A \hyp PL metric is called Delaunay if for every adjacent \hyp triangles $\tri ijk,\tri ijl\in F$ isometric embedding in the Poincar\'e disc $\mb{D}$, then $l\notin \text{int}(C)$, where $\text{int}(C)$ is the interior of the circumscribed circle of $\tri ijk$ in the Euclidean sense.
The curvature is similarly defined by using the \hyp metric. A map $\phi^h:|T|\ra \mb{C}$ is called a \hyp geodesic embedding if for every $ij\in E$, $\phi^h$ maps $ij$ to a \hyp geodesic segment connecting $\phi^h(i)$ and $\phi^h(j)$, and $\phi^h$ maps $|T|$ homeomorphically to its image. It is clear that a \hyp geodesic embedding $\phi^h$ gives a flat hyperbolic PL metric $l^h(\phi^h)$, or $l^h$ for short, by using the \hyp distance $d^h$, $l^h_{ij}=d^{h}(\phi^h(i),\phi^h(j))$.

Let $i\in V^o$ and $R_i$ be its $1$-ring neighborhood. Denote $D$ as the unit disc with the Euclidean metric and $\mb{D}$ as the unit disc with the \hyp metric. Let $\phi:R_i\ra D\subset \mb{C}$ be a geodesic embedding. It seems that $\phi$ also induces a \hyp geodesic embedding $\phi^h:R_i\ra\mb{D}$ by connecting the edges by the \hyp segments of $\mb{D}$ and mapping the faces homeomorphically to its image. It is not true in general, whose reason will be clarified in the proof of the following lemma, while it indeed holds under certain conditions.
\begin{lem}\label{hyp emb}
Let $\phi:R_i\ra D\subset \mb{C}$ be a geodesic embedding such that $\theta_{ij}^k\geq \ep$ for all $\tri ijk\in R_i$, for some constant $\ep>0.$ Suppose
$$l_{ij}<(1-|\phi(i)|^2)\sin\ep,\quad\text{ for every }ij\in E.$$
Then $\phi$ induces a \hyp geodesic embedding $\phi^h:R_i\ra\mb{D}$ such that $\phi^h$ coincides with $\phi$ on the set of vertices, maps the edges to the \hyp segments and maps the faces homeomorphically to its image.
\end{lem}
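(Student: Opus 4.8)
The plan is to reduce the statement to one quantitative fact: a hyperbolic geodesic segment of $\mb D$ whose endpoints are far from $\partial\mb D$ bends only slightly away from the Euclidean segment with the same endpoints, the bending being controlled by the ratio $(\text{edge length})/(1-|\text{endpoint}|^2)$. Recall that for $p,q\in\mb D$ not collinear with $0$, the hyperbolic geodesic through $p,q$ is the arc of the unique circle $C$ through $p,q$ orthogonal to $\partial\mb D$; if $C$ has centre $c$ and radius $r$, orthogonality reads $|c|^2=1+r^2$, while $|p-c|^2=r^2$ gives $\langle p,c\rangle=\tfrac12(1+|p|^2)$, so by Cauchy--Schwarz $|c|\ge\tfrac{1+|p|^2}{2|p|}$ and
$$r^2=|c|^2-1\ \ge\ \big(\tfrac{1-|p|^2}{2|p|}\big)^2,\qquad\text{hence}\qquad 2r>1-|p|^2 .$$
Since a chord of length $\ell$ in a circle of radius $r$ makes a tangent--chord angle $\arcsin\tfrac{\ell}{2r}$ at each endpoint and subtends central angle $2\arcsin\tfrac{\ell}{2r}$, it follows that the hyperbolic segment $[p,q]^h$ leaves $p$ within angle $<\arcsin\tfrac{l_{pq}}{1-|p|^2}$ of the Euclidean ray $pq$, has central angle $<2\arcsin\tfrac{l_{pq}}{1-|p|^2}$, and lies in the thin circular lens it cuts off on the chord $[p,q]$. (The radial case $0\in pq$ gives a straight geodesic and needs no estimate.)

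Fix the interior vertex $i$, with $1$-ring $R_i$ and link $j_1,\dots,j_n$. Because $\phi$ is a Euclidean geodesic embedding, the rays $\phi(i)\phi(j_1),\dots,\phi(i)\phi(j_n)$ occur in cyclic order with wedge angles $\theta^i_{j_sj_{s+1}}\ge\ep$ summing to $2\pi$, and $\ep\le\tfrac\pi3$ since some triangle of $R_i$ has all angles $\ge\ep$. For each spoke $ij_s$ the hypothesis $l_{ij_s}<(1-|\phi(i)|^2)\sin\ep$ gives, via the estimate above, that the hyperbolic spoke $[\phi(i),\phi(j_s)]^h$ leaves $\phi(i)$ within angle $<\ep$ of $\phi(i)\phi(j_s)$ and is a short arc (central angle $<2\ep<\pi$). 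A crude count would want this deviation to be $<\tfrac\ep2$; the key refinement is that the deviation of $[\phi(i),\phi(j_s)]^h$ degenerates to $0$ as the chord $\phi(i)\phi(j_s)$ becomes radial at $\phi(i)$ (there the segment is already straight), and that its sign is that of the angle from this chord to the outward radial direction at $\phi(i)$, so two consecutive hyperbolic spokes never swing towards each other by enough to reverse the cyclic order. Consequently the hyperbolic spokes are in the same cyclic order, meet pairwise only at $\phi(i)$, the wedge between consecutive ones is positive, and the hyperbolic angle of the triple $\phi(i),\phi(j_s),\phi(j_{s+1})$ at $\phi(i)$ is positive and close to $\theta^i_{j_sj_{s+1}}<\pi$; in particular this triple is not hyperbolically collinear and bounds a genuine hyperbolic triangle $\tri^h ij_sj_{s+1}$.

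Finally I would glue the pieces together. The concatenation of the hyperbolic edges $[\phi(j_s),\phi(j_{s+1})]^h$ is an arc-by-arc small perturbation of the Euclidean rim polygon, hence a simple closed curve enclosing $\phi(i)$, and the $n$ short hyperbolic spokes partition the closed region it bounds into the triangles $\tri^h ij_sj_{s+1}$, with disjoint interiors and adjacency matching $R_i$. One then defines $\phi^h$ to equal $\phi$ on vertices, to map each edge isometrically onto the corresponding hyperbolic geodesic segment, and to map each face $\tri ij_sj_{s+1}$ homeomorphically onto $\tri^h ij_sj_{s+1}$ compatibly with these boundary maps; this is a well-defined homeomorphism onto its image, i.e.\ the asserted hyperbolic geodesic embedding. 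I expect the main obstacle to be exactly the refinement in the middle paragraph: ruling out that the passage from straight edges to hyperbolic edges makes two spokes cross or collapses a wedge at $\phi(i)$ — this is where the statement "is not true in general", and is precisely what the hypotheses $l_{ij}<(1-|\phi(i)|^2)\sin\ep$ and the uniform angle lower bound $\ep$ on $R_i$ are there to prevent.
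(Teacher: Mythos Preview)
Your proposal shares the paper's skeleton and its key estimate $2r>1-|z_0|^2$ for the orthogonal circle through $z_0=\phi(i)$, and you correctly reduce the lemma to showing that the hyperbolic spokes from $z_0$ keep the Euclidean cyclic order with wedge angles summing to $2\pi$. The gap is precisely in your ``key refinement''. Your two qualitative observations --- that the tangent--chord deviation $\delta_s$ has $|\delta_s|<\ep$, vanishes at radial chords, and has sign governed by which side of the radial line $\alpha_s$ lies on --- do handle the case where $\delta_s,\delta_{s+1}$ have the same sign (then $|\delta_s-\delta_{s+1}|<\ep\le\alpha_{s+1}-\alpha_s$). But when the \emph{inward} radial direction falls between $\alpha_s$ and $\alpha_{s+1}$ you get $\delta_s>0$, $\delta_{s+1}<0$, so $\delta_s-\delta_{s+1}$ can a priori approach $2\ep$ while $\alpha_{s+1}-\alpha_s$ may be as small as $\ep$. ``Degenerates to $0$ at radial'' is not by itself enough; you would need the quantitative bound $\delta_s<\pi-\alpha_s$ and $|\delta_{s+1}|<\alpha_{s+1}-\pi$ (true under the hypothesis, but not argued). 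This is exactly the obstacle you flag at the end, and you have not overcome it.

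The paper closes this gap by a different device: instead of controlling tangent directions at $z_0$, it works at the \emph{opposite} vertex $z_{k+1}$. The same radius estimate gives half-central-angle $\tfrac12\angle z_0 z_* z_k<\ep$; since nondegeneracy forces $\angle z_0 z_{k+1} z_k\in[\ep,\pi-\ep]$, the inscribed-angle theorem places $z_{k+1}$ outside both circular arcs through $z_0,z_k$, hence on the correct side of the hyperbolic geodesic (i.e.\ $\arg(v(z_{k+1})/v(z_k))\in(0,\pi)$). This single comparison replaces your sign analysis and sidesteps the problematic case entirely. Incidentally, your separate discussion of the rim edges is unnecessary: once the wedge angles at $z_0$ are all in $(0,\pi)$ and sum to $2\pi$, the hyperbolic triangles are hyperbolically star-shaped from $z_0$ and tile injectively without further argument.
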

\begin{proof}
Let $\deg i=m$. Let $z_k=\phi(j_k)$, $j_k\in R_i\setminus\{i\}$, $k=1,\cdots,m$, be anti-clockwise on $\phi(R_i)$, and $z_0=\phi(i)$.
Let $\exp_{z_0}$ be the exponential map of the hyperbolic metric at $z_0$. Identifying $T_{z_0}\mb{D}$ with $\mb{C}$ by translating $z_0$ to the origin, then for $z\in\mb{D}$, denote $v(z)=\exp_{z_0}^{-1}z\in \mb{C}$.
%Let $c_k:[0,1]\ra\mb{D}$ be the parameterized \hyp geodesic from $z_0$ to $z_k$. Denote $v_k=c\pr(0)$.
Then we only need to show the following claims.
\begin{equation}\label{1}
\arg (\frac{v(z_{k+1})}{v(z_k)})\in(0,\pi),\quad k=1,\cdots,m,
\end{equation}
and
\begin{equation}\label{2}
\sum\limits_{k=1}^{m}\arg\big(\frac{v(z_{k+1})}{v(z_k)}\big)=2\pi,
\end{equation}
where $z_{m+1}=z_1$.
Fix $k\in \{1,\cdots,m\}$, denote $$P=\{z\in\mb{C}:\arg(\frac{z-z_0}{z_k-z_0})\in(0,\pi)\},$$
and $$P_h=\{z\in\mb{D}:\arg(\frac{v(z)}{v(z_k)})\in(0,\pi)\}.$$
\begin{figure}[htbp]\centering
\includegraphics[width=0.7\textwidth]{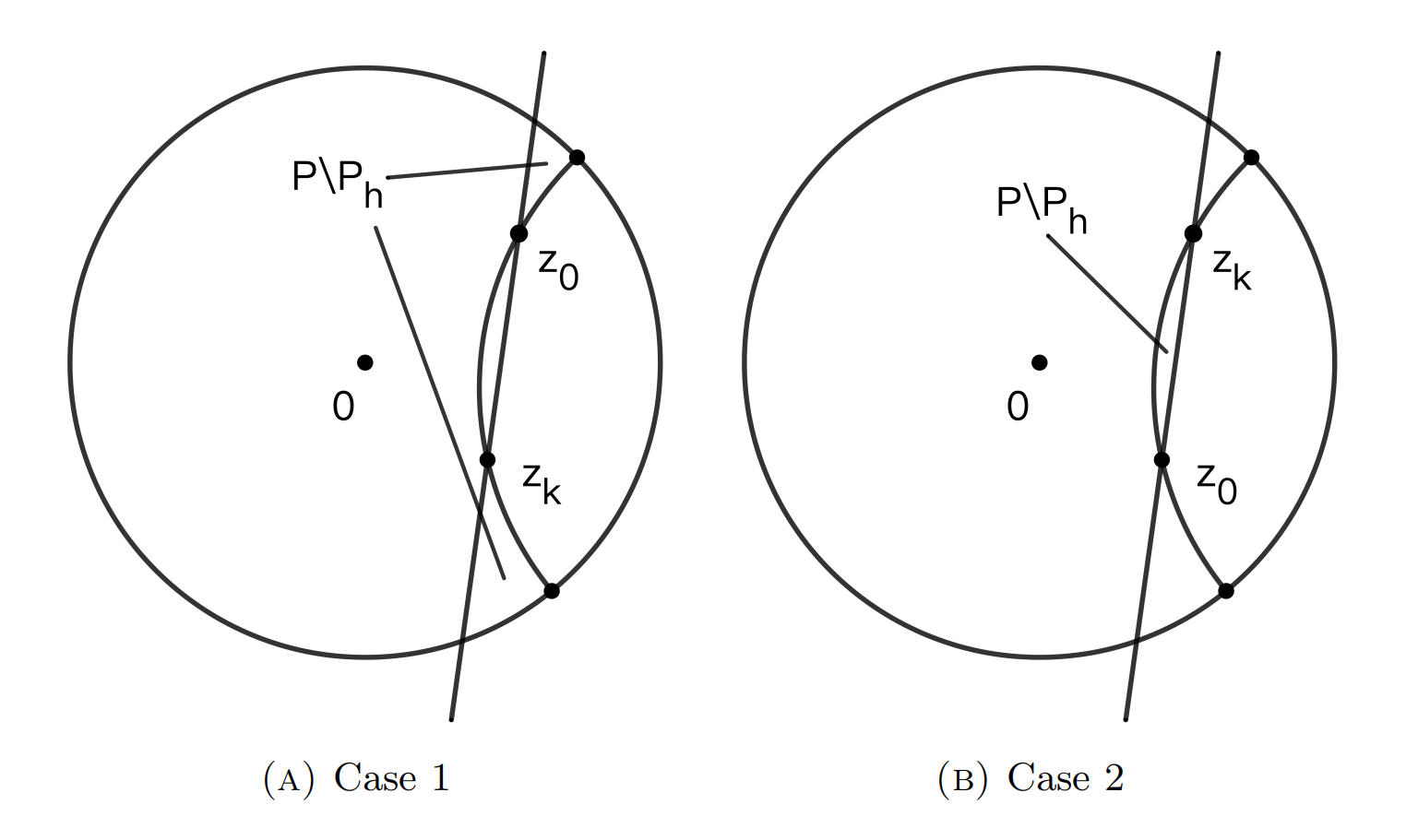}
%\caption{Hexagonal triangulation}\label{P}
\end{figure}
We first show the claim (1). Since $\phi$ is a geodesic embedding into $D$, we have $z_{k+1}\in P$. But if $z_{k+1}\in P\setminus P_h$, then $\phi^h$ fails to be a \hyp geodesic embedding. So we need to rule out this possibility.

Let $\ga_h$ be the entire geodesic connecting $z_0$ and $z_k$ with respect to the \hyp metric. If $\ga_h$ is a straight line, then $z_{k+1}\in P_h$. Suppose $\ga_h$ is a circle, which is orthogonal to the boundary of the unit disc $D$. We denote $z_*$ as its Euclidean center and $R$ as its Euclidean radius. Then
$$R^2+1=|z_*|^2\leq (|z_0|+R)^2.$$
So
$$1-|z_0|^2\leq 2R|z_0|\leq 2R.$$
Then
$$\sin\frac{\angle z_0z_*z_{k}}{2}=\frac{l_{ij_k}}{2R}<\frac{(1-|z_0|^2)\sin \ep}{2R}\leq \sin\ep.$$
So
$$\angle z_0z_{k+1}z_k\geq \ep>\frac{1}{2}\angle z_0z_{*}z_{k},$$
and
$$\angle z_0z_{k+1}z_k\leq \pi-\ep<\pi-\frac{1}{2}\angle z_0z_{*}z_{k}.$$
Then from the knowledge of the plane geometry, we see $z_k\in P_h$. We finish the proof of claim (\ref{1}).

Next we show the claim (\ref{2}). We have
\begin{equation}\label{3}
\arg\big(\frac{v(z_{k+1})}{v(z_k)}\big)+\arg\big(\frac{v(z_{k})}{z_k-z_0}\big)=
\arg\big(\frac{z_{k+1}-z_0}{z_k-z_0}\big)+\arg\big(\frac{v(z_{k+1})}{z_{k+1}-z_0}\big)+2n\pi,
\end{equation}
for some integer $n$. Notice that $$\arg\big(\frac{v(z_{k})}{z_k-z_0}\big)\in(-\frac{\pi}{2},\frac{\pi}{2}).$$
Then together with the claim (\ref{1}), both the right hand side and the left hand side of the Equation (\ref{3}) lies in $(-\frac{\pi}{2},\frac{3\pi}{2})$, so $n=0$. Then we obtain
$$\sum\limits_{k=1}^{m}\arg\big(\frac{v(z_{k+1})}{v(z_k)}\big)=\sum\limits_{k=1}^{m}\arg\big(\frac{z_{k+1}-z_0}{z_k-z_0}\big)=2\pi.$$
We finish the proof.
\end{proof}
\begin{rem}
In \cite{Wu22}, Wu showed that the acute condition also ensures that $\phi^h$ is a \hyp geodesic embedding.
\end{rem}
%In general a geodesic embedding $\phi:|T|\ra D\subset \mb{C}$ does not necessarily induce a \hyp geodesic embedding. But $\phi$ can indeed induce a \hyp PL metric $l^h$ which may not be flat.
The discrete conformality and the \hyp discrete conformality are related as follows. One may refer to Lemma 5.1 in \cite{Wu22} for the proof.
\begin{lem}\label{Ehconf}
Let $\phi,\phi\pr:|T|\ra D\subset\mb{C}$ be two geodesic embeddings with the induced PL metric $l,l^{\prime}$ respectively. Suppose both $\phi,\phi\pr$ induce \hyp geodesic embeddings $\phi^h,\phi^{h\prime}:|T|\ra\mb{D}$ with the induced \hyp PL metric $l^h,l^{h\prime}$ respectively. Then $l$ and $l\pr$ are discrete conformal $l\pr=u*l$ with the conformal factor $u$ if and only if $l$ and $l\pr$ are \hyp discrete conformal $l^{h\prime}=u^h*^hl^h$ with the \hyp conformal factor $u^h$, where $u$ and $u^h$ are related by $$u^h_i=u_i+\log\frac{1-|z_i|^2}{1-|z_i\pr|^2},$$ for $z_i=\phi(i)$, $z_i\pr=\phi\pr(i)$.
\end{lem}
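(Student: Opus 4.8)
My proof would rest on a single exact identity comparing Euclidean and \hyp distance in the Poincar\'e disc: for all $z,w\in\mb{D}$,
$$\sinh\frac{d^h(z,w)}{2}=\frac{|z-w|}{\sqrt{(1-|z|^2)(1-|w|^2)}}.$$
The plan is to establish this identity, substitute it into the two conformality relations, and read off the stated correspondence edge by edge. To prove the identity I would start from the closed form $\tanh\frac{d^h(z,w)}{2}=\left|\frac{z-w}{1-\bar{z}w}\right|$ for the Poincar\'e metric, combine it with the elementary algebraic identity $|1-\bar{z}w|^2-|z-w|^2=(1-|z|^2)(1-|w|^2)$, and use $\sinh^2 t=\tanh^2 t/(1-\tanh^2 t)$. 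Alternatively, since $\sinh\frac{d^h}{2}$ is an isometry invariant and the right-hand side is invariant under the automorphisms of $\mb{D}$, one may move $w$ to the origin by a M\"obius isometry and verify the reduced case $\sinh\frac{d^h(z,0)}{2}=|z|/\sqrt{1-|z|^2}$ directly from $d^h(z,0)=\log\frac{1+|z|}{1-|z|}$.

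Next I would specialize the identity to the two embeddings. Because $\phi^h$ coincides with $\phi$ on vertices we have $l^h_{ij}=d^h(z_i,z_j)$ with $z_i=\phi(i)$, and likewise $l^{h\prime}_{ij}=d^h(z_i\pr,z_j\pr)$ with $z_i\pr=\phi\pr(i)$. It is precisely the hypothesis that $\phi,\phi\pr$ induce \hyp geodesic embeddings, supplied in the relevant situations by Lemma \ref{hyp emb}, that makes $l^h,l^{h\prime}$ genuine \hyp PL metrics, so that these edgewise formulas are simultaneously valid. The identity then gives, for every $ij\in E$,
$$\sinh\frac{l^h_{ij}}{2}=\frac{l_{ij}}{\sqrt{(1-|z_i|^2)(1-|z_j|^2)}},\qquad \sinh\frac{l^{h\prime}_{ij}}{2}=\frac{l\pr_{ij}}{\sqrt{(1-|z_i\pr|^2)(1-|z_j\pr|^2)}}.$$

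The remaining step is bookkeeping. Substituting these into the \hyp relation $\sinh\frac{l^h_{ij}}{2}=e^{(u^h_i+u^h_j)/2}\sinh\frac{l^{h\prime}_{ij}}{2}$ and eliminating the Euclidean lengths through $l_{ij}=e^{(u_i+u_j)/2}l\pr_{ij}$, the edge-length factors cancel and one is left, for each edge $ij$, with an identity equating $e^{(u^h_i+u^h_j)/2}$ to $e^{(u_i+u_j)/2}$ times a ratio of the vertex weights $1-|z_i|^2$ and $1-|z_i\pr|^2$. Because that ratio is a product of per-vertex factors, the identity separates over the two endpoints and yields the pointwise correspondence
$$u^h_i=u_i+\log\frac{1-|z_i|^2}{1-|z_i\pr|^2}.$$
Since $\phi,\phi\pr$ are fixed, $u\mapsto u^h$ is translation by a fixed vector of $\mb{R}^V$, hence a bijection; under it the Euclidean relation holds for every edge exactly when the \hyp relation holds for every edge, which is both directions of the ``if and only if'' together with the explicit formula.

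The computation is entirely elementary, so I do not expect an analytic obstacle; the delicate point is the sign bookkeeping in the last step, namely checking that the edge identity genuinely separates into per-vertex contributions (this is what licenses passing from an edge condition to a pointwise formula for $u^h$) and fixing the direction of the vertex weight so that it matches the paper's normalizations of $u*l$ and $u^h*^hl^h$.
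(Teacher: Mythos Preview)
Your approach is correct and is exactly the computation underlying the result: the paper does not give its own proof but defers to Lemma~5.1 in \cite{Wu22}, which proceeds through the same Poincar\'e-disc identity $\sinh\tfrac{d^h(z,w)}{2}=|z-w|/\sqrt{(1-|z|^2)(1-|w|^2)}$ and the same edgewise substitution. One small tightening: the phrase ``the identity separates over the two endpoints and yields the pointwise correspondence'' is a bit loose on its own, since an edge relation $a_i+a_j=0$ does not literally force $a_i=0$ pointwise without an extra argument (here the existence of triangles would supply it); the clean route, which you in fact take in your final paragraph, is simply to \emph{define} $u^h$ from $u$ by the stated formula and observe that under this bijection the Euclidean and \hyp edge relations become the same equation.
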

\begin{rem}
In fact the assumption that $\phi,\phi\pr$ induce \hyp geodesic embeddings is unnecessary if we consider the generalized (hyperbolic) PL metric.
\end{rem}

\section{Maximum principles}\label{maximum}
Maximum principle plays a very important role in partial differential equations and geometric analysis. For the discrete conformal geometry, the curvature $K$, which is clearly nonlinear as an operator on functions on the set of vertices, also satisfies the maximum principle. The following lemma is a corollary of Theorem 3.1 in \cite{LSW20}. In \cite{DGM22}, there is another proof of the maximum principle for a special case.
\begin{lem}\label{mp}
Let $i\in V^o$ and $R_i$ be its $1$-ring neighborhood. Let $\phi,\phi\pr:R_i\ra \mb{C}$ be two Delaunay geodesic embeddings with induced PL metric $l,l\pr$ respectively. Suppose $\phi$ and $\phi\pr$ are discrete conformal, $l\pr=u*l$. Then
$$\max\limits_{j\in R_i}|u_j|\leq \max\limits_{j\in \partial R_i}|u_j|.$$
Furthermore if $|u_i|=\max\limits_{j\in R_i}|u_j|$, then $u$ is a constant on $R_i$.
\end{lem}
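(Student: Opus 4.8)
The plan is to reduce the whole statement to the following: \emph{if $u_i=\max_{j\in R_i}u_j$ or $u_i=\min_{j\in R_i}u_j$, then $u$ is constant on $R_i$}. Granting this, the ``furthermore'' clause is immediate, since $|u_i|=\max_{R_i}|u_j|$ forces $u_i=\max_{R_i}u_j$ or $u_i=\min_{R_i}u_j$; and the displayed inequality follows as well, because if $\max_{R_i}|u_j|$ is attained at a boundary vertex there is nothing to prove, while if it is attained at $i$ then $u$ is constant on $R_i$ by the ``furthermore'' clause, making the inequality trivial. One should also record at the outset that $K_i(l)=K_i(l\pr)=0$: since $\phi$ (resp.\ $\phi\pr$) maps $R_i$ homeomorphically onto an embedded disc in $\mb{C}$ with $\phi(i)$ interior and every edge a straight segment, the Euclidean angles at $\phi(i)$ sum to exactly $2\pi$.

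To prove the reduced statement I would treat the maximum case (the minimum case being identical after reversing the relevant inequalities). Set $M=u_i=\max_{j\in R_i}u_j$ and compare $u$ with the constant conformal factor $w\equiv M$. Then $w*l=e^{M}l$ is again a Delaunay geodesic embedding with $K_i(w*l)=K_i(l)=0$, and $w\ge u$ with equality at $i$. Consider the segment $v^t=(1-t)u+tw$, $t\in[0,1]$, joining $u$ to $w$; interpreting this, if necessary, in the extended framework of \cite{BPS15}, where degenerate triangles are permitted and the Delaunay triangulation of $v^t*l$ varies continuously, the weights $\eta_{ij}$ taken with respect to that triangulation satisfy $\eta_{ij}\ge 0$ throughout. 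The variational formula then gives
$$\frac{d}{dt}K_i(v^t*l)=-\sum_{j:ij\in E}\eta_{ij}(v^t)\big(\dot v^t_j-\dot v^t_i\big)=-\sum_{j:ij\in E}\eta_{ij}(v^t)\,(M-u_j)\le 0.$$
Integrating over $[0,1]$ and using $K_i(v^1*l)=K_i(v^0*l)=0$ yields $\sum_{j:ij\in E}\big(\int_0^1\eta_{ij}(v^t)\,dt\big)(M-u_j)=0$; since every summand is nonnegative, each vanishes, so for each $j$ either $u_j=M$ or $\eta_{ij}(v^t)\equiv 0$ on $[0,1]$.

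It remains to exclude the alternative $\eta_{ij}(v^t)\equiv 0$ for an edge $ij$ with $u_j<M$, which then forces $u_j=M$ for all $j\in\partial R_i$ and hence $u\equiv M$ on $R_i$. Under the uniformly Delaunay hypothesis on $l$ in Theorem \ref{main Delaunay} this is instantaneous: evaluating the alternative at $t=1$ gives $\eta_{ij}(e^{M}l)=\eta_{ij}(l)\ge\ep>0$, a contradiction. In the merely Delaunay case of the lemma, $\eta_{ij}(v^t)\equiv 0$ says the quadrilateral across $ij$ stays inscribed in a circle along the whole deformation; this forces algebraic relations among the five lengths of that quadrilateral and the conformal values at its four vertices, and one checks that propagating such relations around $R_i$ from a vertex where $u|_{\partial R_i}$ attains its minimum is impossible — this degenerate, cocircular analysis is exactly the content of \cite[Theorem 3.1]{LSW20}, which is why the lemma is quoted as a corollary of it. Everything else — the reduction in the first paragraph and the monotone integration in the second — is routine once one has the variational formula and the continuity of the Delaunay triangulation. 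Accordingly, the step I expect to be the real obstacle is this cocircular degeneracy, and it is precisely the reason Theorem \ref{main Delaunay} strengthens ``Delaunay'' to ``uniformly Delaunay'' for the metric $l$.
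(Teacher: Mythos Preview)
The paper itself does not prove this lemma: it is simply recorded as a corollary of Theorem 3.1 in \cite{LSW20} (with a pointer to \cite{DGM22} for a special case). So there is no proof in the paper to compare against, and since your third paragraph also ultimately defers to \cite[Theorem 3.1]{LSW20} for the decisive cocircular case, at the level of logical dependence you and the paper agree.

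That said, the variational sketch you insert before that citation has a real gap. The reduction in your first paragraph is correct, and the formal identity $\frac{d}{dt}K_i(v^t*l)=-\sum_{j}\eta_{ij}(v^t)(M-u_j)$ is fine wherever it makes sense. The problem is the sentence ``interpreting this, if necessary, in the extended framework of \cite{BPS15}'', which papers over the actual difficulty. Along the linear segment $v^t=(1-t)u+tw$ there is no reason that $v^t*l$ remains a valid PL metric on the \emph{fixed} triangulation of $R_i$, let alone a Delaunay one; without Delaunay, $\eta_{ij}(v^t)$ may be negative and your monotonicity inequality fails. Passing instead to the Delaunay triangulation of $v^t*l$ does restore $\eta_{ij}\ge 0$, but it also changes the combinatorics: a flip on an interior edge $ij_k$ removes $j_k$ from the link of $i$, so the index set in $\sum_{j:ij\in E}$ becomes $t$-dependent and the integral $\int_0^1\eta_{ij}(v^t)\,dt$ for a fixed $j$ is no longer well defined. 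Handling this carefully is exactly where the content of \cite[Theorem 3.1]{LSW20} lies. Your sketch is a good heuristic for \emph{why} the lemma should hold---and your observation that under the uniformly Delaunay hypothesis of Theorem \ref{main Delaunay} one can simply evaluate at $t=1$ to get $\eta_{ij}(l)\ge\ep>0$ and kill the bad branch is a nice shortcut in that stronger setting---but it is not a self-contained proof of the lemma as stated.
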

As a direct corollary, we have the following maximum principle.
\begin{lem}\label{mpg}
Let $T=(V,E,F)$ be a triangulation of a closed surface with boundary. Let $\phi,\phi\pr:|T|\ra \mb{C}$ be two Delaunay geodesic embeddings with induced PL metric $l,l\pr$ respectively. Suppose $\phi$ and $\phi\pr$ are discrete conformal, $l\pr=u*l$. Then
$$\max\limits_{j\in V}|u_j|\leq \max\limits_{j\in \partial V}|u_j|.$$
Furthermore if $\max\limits_{j\in V^o}|u_j|=\max\limits_{j\in V}|u_j|$, then $u$ is a constant on $V$.
\end{lem}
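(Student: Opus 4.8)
The plan is to deduce this from the local maximum principle of Lemma~\ref{mp} by a compactness-and-propagation argument. Since $|T|$ is a compact triangulated surface, $V$ is finite, so $M:=\max_{j\in V}|u_j|$ is attained at some vertex $j_0$. If $j_0\in\partial V$ the asserted inequality is immediate, so the only case to treat is $j_0\in V^o$. Replacing $u$ by $-u$ if necessary — which merely interchanges the two Delaunay geodesic embeddings, since $l=(-u)*l\pr$ — I may assume $u_{j_0}=M$, and I may also assume $M>0$, for otherwise $u\equiv 0$ and there is nothing to prove.

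The first step is to localize at $j_0$. The restrictions $\phi|_{R_{j_0}}$ and $\phi\pr|_{R_{j_0}}$ are again Delaunay geodesic embeddings of the $1$-ring $R_{j_0}$: the Delaunay inequalities at the interior edges of $R_{j_0}$ (those joining $j_0$ to its neighbours) are inherited from $T$, while the outer edges of $R_{j_0}$ each lie in a single face and impose no condition. They are discrete conformal with factor $u|_{R_{j_0}}$, so Lemma~\ref{mp} applies at $i=j_0$ and gives $\max_{j\in R_{j_0}}|u_j|\le\max_{j\in\partial R_{j_0}}|u_j|$. Since $\partial R_{j_0}\subseteq R_{j_0}$ and $|u_{j_0}|=M\ge|u_j|$ for all $j\in V$, every inequality here is an equality; in particular $|u_{j_0}|=\max_{j\in R_{j_0}}|u_j|$, so the ``furthermore'' clause of Lemma~\ref{mp} forces $u\equiv u_{j_0}=M$ on $R_{j_0}$. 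Thus every vertex adjacent to $j_0$ again realizes the maximum $M$.

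The remaining step is propagation. Set $W=\{j\in V:u_j=M\}$. The previous step shows that whenever $j\in W\cap V^o$ one has $R_{j}\subseteq W$, i.e. the property of realizing $M$ spreads from an interior vertex to its whole $1$-ring. For the inequality, assume $\partial V\neq\emptyset$, take an edge-path in the $1$-skeleton from $j_0$ to $\partial V$, and let $w$ be its first vertex in $\partial V$; all earlier vertices are interior, so chaining the inclusion $R_{j}\subseteq W$ along the path gives $w\in W$ and hence $\max_{j\in\partial V}|u_j|\ge|u_w|=M$. For the ``furthermore'' clause, if $\max_{j\in V^o}|u_j|=M$ then $j_0$ can be chosen in $V^o$, and chaining $R_{j}\subseteq W$ along edge-paths now shows that $W$ exhausts $V$, so $u\equiv M$ is constant; here one invokes the connectedness of $T$ (and of its set of interior vertices). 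The only genuinely delicate point is this last propagation: a single application of Lemma~\ref{mp} controls only one $1$-ring, so one must check that consecutive $1$-rings overlap enough to sweep out all of $V$ without stalling at $\partial V$; everything else is a direct appeal to Lemma~\ref{mp}.
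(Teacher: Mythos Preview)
Your argument is correct and matches the paper's approach: the paper simply declares Lemma~\ref{mpg} to be ``a direct corollary'' of Lemma~\ref{mp} and gives no further proof, and your compactness-plus-propagation argument is exactly the standard way to promote a local maximum principle on $1$-rings to a global one on a finite complex. The delicate point you flag about propagation possibly stalling at $\partial V$ is not addressed in the paper either, so your write-up is in fact more careful than the original.
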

For the \hyp setting, the minimum principle holds, even though we still call it ``maximum principle". The proof is in fact from Lemma 5.3 in \cite{Wu22}. We give a proof here to mention that the uniformly acute condition in Lemma 5.3 in \cite{Wu22} is just to ensure the \hyp geodesic embedding.
\begin{lem}\label{hmp}
Let $i\in V^o$ and $R_i$ be its $1$-ring neighborhood. Let $\phi^h,\phi^{h\prime}:R_i\ra \mb{D}$ be two Delaunay \hyp geodesic embeddings with induced \hyp PL metric $l^h,l^{h\prime}$ respectively. Suppose $\phi^h$ and $\phi^{h\prime}$ are \hyp discrete conformal, $l^{h\prime}=u^h*^hl^h$. Then
$$\min\limits_{j\in R_i}\{u^h_j,0\}\geq \min\limits_{j\in \partial R_i}\{u^h_j,0\}.$$
Furthermore if $u^h_i=\min\limits_{j\in R_i}\{u^h_j,0\}$, then $u^h$ is identically $0$ on $R_i$.
\end{lem}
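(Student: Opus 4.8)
I would prove this by running, in the hyperbolic setting, the argument that gives the Euclidean maximum principle of Lemma~\ref{mp}. The key point is that the curvature $K^h_i$, regarded as a function of the hyperbolic conformal factor $u^h$ on $R_i$ (defined wherever the hyperbolic triangles of $R_i$ are all nondegenerate), obeys a variational identity of the shape
$$dK^h_i=-\sum_{j:\,ij\in E}\eta^h_{ij}\,(du^h_j-du^h_i)+\mu_i\,du^h_i,$$
where $\eta^h_{ij}\ge 0$ precisely when the hyperbolic Delaunay (Euclidean circumcircle) condition holds along the edge $ij$ — the exact analogue of the Euclidean weight $\eta_{ij}$ — and where $\mu_i\ge 0$ is a genuinely hyperbolic ``mass'' term, absent in the flat case, reflecting the negative curvature of the background; it is this extra nonnegative term $\mu_i\,du^h_i$ that forces the conclusion to be one-sided and truncated at $0$. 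The hypothesis that $\phi^h,\phi^{h\prime}$ are hyperbolic geodesic embeddings is used only to guarantee, first, that the center curvature vanishes at both ends, $K^h_i(0)=K^h_i(u^h)=0$ (the total hyperbolic angle around an interior vertex of an embedded triangulation is $2\pi$), and second, that $l^h$ and $l^{h\prime}=u^h*^hl^h$ lie in the nondegenerate locus; this is exactly where the uniformly acute condition enters Lemma~5.3 of \cite{Wu22} — only to build the hyperbolic geodesic embedding — so assuming the embedding removes the need for acuteness.

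Granting the identity, I would integrate it along a path from $u^h=0$ to $u^h$, carrying out intrinsic Delaunay flips whenever the path threatens to leave the Delaunay region (such flips change neither the hyperbolic PL metric nor any curvature $K^h_j$, exactly as in the proof of Theorem~3.1 of \cite{LSW20}). Using $K^h_i(0)=K^h_i(u^h)=0$, this yields
$$\bar\mu_i\,u^h_i=\sum_{j:\,ij\in E}\bar\eta^h_{ij}\,(u^h_j-u^h_i),$$
with path-averaged coefficients $\bar\eta^h_{ij}\ge 0$ and $\bar\mu_i\ge 0$. Now suppose $u^h_i=\min_{j\in R_i}\{u^h_j,0\}$, so $u^h_i\le 0$ and $u^h_i\le u^h_j$ for all $j$; then the left-hand side is $\le 0$ while the right-hand side is $\ge 0$, so both vanish, forcing $u^h_i=0$ and $u^h_j=u^h_i$ for every $j$ (using positivity of $\bar\mu_i$ and of the $\bar\eta^h_{ij}$, and treating the borderline cases by the usual strong-maximum-principle argument), i.e.\ $u^h\equiv 0$ on $R_i$. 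The displayed inequality $\min_{j\in R_i}\{u^h_j,0\}\ge\min_{j\in\partial R_i}\{u^h_j,0\}$ then follows at once by contradiction, since its failure would make $u^h_i$ a strict, negative minimum of $u^h$ over $R_i$ and hence, by what was just shown, force $u^h_i=0$.

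The step I expect to be the real obstacle is establishing the variational identity with the correct signs — that $\eta^h_{ij}\ge 0$ is equivalent to the Euclidean-circumcircle Delaunay condition in $\mb{D}$, and that the hyperbolic mass term $\mu_i$ is $\ge 0$ and is $>0$ on the interior of the nondegenerate locus — together with the bookkeeping for the Delaunay flips and for the boundary of that locus. An alternative, more self-contained route is to transfer everything to the Euclidean plane by Lemma~\ref{Ehconf}: after post-composing $\phi^h$ and $\phi^{h\prime}$ with hyperbolic isometries so that $\phi^h(i)=\phi^{h\prime}(i)=0$, straighten each hyperbolic edge to the Euclidean segment with the same endpoints, obtaining Delaunay geodesic embeddings $\phi^e,\phi^{e\prime}$ into the unit disc (the straightened triangles have the same vertices, hence the same Euclidean circumcircles, so Delaunayness is preserved), together with $l^{e\prime}=u^e*l^e$, $u^h_j=u^e_j+\log\frac{1-|z_j|^2}{1-|z_j\pr|^2}$, and $u^h_i=u^e_i$ since $z_i=z_i\pr=0$; applying Lemma~\ref{mp} to $u^e$ then reduces matters to controlling the boundary correction terms, whose sign is opposite to that of $u^h_i+u^h_j$ — and here the truncation at $0$ is exactly what makes the comparison close.
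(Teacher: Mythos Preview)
Your ``alternative, more self-contained route'' in the last paragraph is precisely the paper's proof. After normalizing so that $\phi^h(i)=\phi^{h\prime}(i)=0$, the paper passes to the Euclidean straightening (same vertex positions, Euclidean edges; Delaunayness survives because circumcircles depend only on vertices and hyperbolic isometries preserve circles), applies Lemma~\ref{mp} to obtain some $j_0\in\partial R_i$ with $u_{j_0}\le u_i=u^h_i$, and then observes that if $u^h_i\le 0$ then $|z'_{j_0}|=l'_{ij_0}=e^{(u_i+u_{j_0})/2}l_{ij_0}\le l_{ij_0}=|z_{j_0}|$, so the correction $\log\frac{1-|z_{j_0}|^2}{1-|z'_{j_0}|^2}\le 0$ and hence $u^h_{j_0}\le u^h_i$. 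The strong statement follows by tracing the equalities back through the strong form of Lemma~\ref{mp}. This is exactly the comparison you outline in your final sentence, and it closes immediately.

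Your primary route --- integrating a hyperbolic variational identity along a path and performing intrinsic Delaunay flips to stay in the Delaunay region --- is genuinely different, and as sketched it has a real gap. On the one-ring $R_i$ every interior edge is radial, of the form $ij$; an intrinsic flip of such an edge removes $j$ from the link of $i$, so after the flip the differential $dK^h_i$ is taken over a different set of neighbors, and the integrated relation $\bar\mu_i\,u^h_i=\sum_j\bar\eta^h_{ij}(u^h_j-u^h_i)$ no longer refers to the original boundary vertices in the way your minimum argument needs. (The flip machinery behind Theorem~3.1 of \cite{LSW20} lives on a full surface triangulation, not on a single star.) Separately, flips do nothing to keep the path inside the nondegenerate locus: along $t\mapsto tu^h$ a hyperbolic triangle can simply collapse, and no flip repairs a failed triangle inequality. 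The paper's transfer-to-Euclidean argument sidesteps both issues at once, since Lemma~\ref{mp} is invoked only at the two endpoints and requires no path between them.
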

\begin{proof}
Since the \hyp discrete conformality and conformal factor are invariant under the \hyp isometric group, we may assume $\phi^h(i)=0$, $\phi^{h\prime}(i)=0$. Then $\phi,\phi\pr$ induce geodesic embeddings into $D\subset\mb{C}$ with PL metric $l,l\pr$ respectively. Notice that the \hyp isometric group preserves circles, so $l,l\pr$ are also Delaunay. From Lemma \ref{Ehconf}, since $l^{h\prime}=u^h*^hl^h$, we have $l$ and $l\pr$ are also discrete conformal, $l\pr=u*l$ for $$u_j=u^h_j-\log\frac{1-|z_j|^2}{1-|z_j\pr|^2}.$$
In particular $u_i=u^h_i$. From the maximum principle Lemma \ref{mp}, we obtain there exists $j_0\in \partial R_i$ such that $u_{j_0}\leq u_i$. Suppose $u^h_i\leq 0$. Then $$|z\pr_{j_0}|=l\pr_{ij_0}=e^{(u_i+u_{j_0})/2}l_{ij_0}\leq l_{ij_0}=|z_{j_0}|.$$
Therefore
$$u_{j_0}^h=u_{j_0}+\log\frac{1-|z_{j_0}|^2}{1-|z_{j_0}\pr|^2}\leq u_i+\log\frac{1-|z_{j_0}|^2}{1-|z_{j_0}\pr|^2}\leq u_i=u_i^h.$$
So
$$\min\limits_{j\in R_i}\{u^h_j,0\}\geq \min\limits_{j\in \partial R_i}\{u^h_j,0\}.$$
Furthermore if $u^h_i=\min\limits_{j\in R_i}\{u^h_j,0\}$, then $u^h_i=u_i\leq 0$ and $u^h_i=u^h_{j_0}$ for some $j_0\in\partial R_i$. Keep track the inequalities in the discussion above, from Lemma \ref{mp} we see $u\equiv 0$ and then $u^h\equiv 0$ on $R_i$.
\end{proof}
As a direct corollary, we have
\begin{lem}
Let $T=(V,E,F)$ be a triangulation of a closed surface with boundary. Let $\phi^h,\phi^{h\prime}:|T|\ra \mb{D}$ be two Delaunay \hyp geodesic embeddings with induced \hyp PL metric $l^h,l^{h\prime}$ respectively. Suppose $\phi^h$ and $\phi^{h\prime}$ are \hyp discrete conformal, $l^{h\prime}=u^h*^hl^h$. Then
$$\min\limits_{j\in V}\{u^h_j,0\}\geq \min\limits_{j\in \partial V}\{u^h_j,0\}.$$
Furthermore if $\min\limits_{j\in V^o}\{u^h_j,0\}=\min\limits_{j\in V}\{u^h_j,0\}$, then $u^h$ is identically $0$ on $V$.
\end{lem}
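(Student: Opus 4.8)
The plan is to deduce this from the local maximum principle Lemma~\ref{hmp} by a connectedness argument, exactly as the Euclidean global principle Lemma~\ref{mpg} is deduced from Lemma~\ref{mp}. Throughout I use that the $1$-skeleton of $T$ is a connected graph (because $|T|$ is a connected surface) and that a \hyp geodesic embedding is flat at every interior vertex, so that $K_j(l^h)=K_j(l^{h\prime})=0$ for all $j\in V^o$. Put $m:=\min_{j\in V}\{u^h_j,0\}\le 0$.

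\emph{The inequality $m\ge\min_{j\in\partial V}\{u^h_j,0\}$.} If $m=0$ there is nothing to prove, since $\min_{j\in\partial V}\{u^h_j,0\}\le 0$. So assume $m<0$ and set $A:=\{j\in V:u^h_j=m\}\ne\emptyset$. I claim $A\subseteq\partial V$. If some $i\in A$ were interior, then, $m$ being the global minimum of $u^h$, one would have $u^h_i=m\le u^h_j$ for all $j\in R_i$, hence $u^h_i=\min_{j\in R_i}\{u^h_j,0\}$ (here $m<0$ is used); the rigidity clause of Lemma~\ref{hmp} would then force $u^h\equiv 0$ on $R_i$, contradicting $u^h_i=m<0$. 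So $A\subseteq\partial V$, and any $i\in A$ gives $\min_{j\in\partial V}\{u^h_j,0\}\le u^h_i=m$; the reverse inequality is trivial since $\partial V\subseteq V$. Applying this same inequality to the pair $(\phi^{h\prime},\phi^h)$, whose \hyp conformal factor is $-u^h$, yields the dual bound $\max_{j\in V}\{u^h_j,0\}\le\max_{j\in\partial V}\{u^h_j,0\}$.

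\emph{The rigidity clause.} Suppose $\min_{j\in V^o}\{u^h_j,0\}=m$. If $m<0$, then $m$ is attained at an interior vertex, contradicting $A\subseteq\partial V$ above; hence $m=0$, i.e.\ $u^h\ge 0$ on all of $V$. Granting for a moment that $u^h$ vanishes at some interior vertex $i$, the proof concludes as in Lemma~\ref{mpg}: $u^h_i=0=\min_{j\in R_i}\{u^h_j,0\}$, so the rigidity clause of Lemma~\ref{hmp} gives $u^h\equiv 0$ on $R_i$; every interior vertex of $R_i$ then also carries $u^h=0$, and iterating along the connected $1$-skeleton yields $u^h\equiv 0$ on $V$. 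The step I expect to be the main obstacle is precisely to produce such an interior zero from $u^h\ge 0$: unlike in Lemma~\ref{mpg}, the hypothesis does not by itself exhibit an interior vertex at which $u^h$ is extremal. To attack it I would combine the dual bound above (so that the maximum of $u^h$ over $V$, like its minimum, is controlled from $\partial V$) with the flatness of both metrics at interior vertices: via the \hyp analogue of the variational formula $dK_i=-\sum_j\eta_{ij}(du_j-du_i)$ of the introduction and the convexity of Luo's discrete conformal functional, integration along $t\mapsto tu^h$ gives $\sum_{i\in\partial V}(K_i(l^{h\prime})-K_i(l^h))\,u^h_i\ge 0$, with equality only if $u^h$ is constant, while Gauss--Bonnet for the two flat-at-interior metrics equates $\sum_{i\in\partial V}(K_i(l^{h\prime})-K_i(l^h))$ with the difference of their hyperbolic areas, and a \hyp dilation is not an isometry, so a nonzero constant conformal factor cannot preserve flatness at interior vertices. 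Assembling these ingredients into a complete argument for the case $u^h\ge 0$ is the technical heart of the proof; the rest is the purely combinatorial propagation borrowed from Lemma~\ref{mpg}.
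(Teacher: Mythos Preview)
Your deduction of the inequality and of the rigidity clause in the case $m<0$ is exactly the ``direct corollary'' the paper has in mind (the paper gives no proof beyond that phrase). So for those parts there is nothing to compare: you reproduce the intended argument.

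The case $m=0$ is another matter, and you are right to flag it. When $m=0$ the hypothesis $\min_{j\in V^o}\{u^h_j,0\}=\min_{j\in V}\{u^h_j,0\}$ reduces to $u^h\ge 0$ on $V$, which is \emph{automatically} satisfied and carries no information about where $u^h$ is extremal. Lemma~\ref{hmp} (even combined with its dual obtained by swapping $\phi^h$ and $\phi^{h\prime}$) only tells you that neither a negative interior minimum nor a positive interior maximum can occur; it does not rule out $u^h>0$ at every interior vertex with the maximum sitting on $\partial V$. So the rigidity clause, as stated, is \emph{not} a direct corollary of Lemma~\ref{hmp}; either the hypothesis was meant to read $\min_{j\in V^o}u^h_j=\min_{j\in V}\{u^h_j,0\}$ (so that an interior zero is produced when $m=0$), or an additional argument is needed. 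The paper supplies none, and since this global lemma is never invoked elsewhere, the gap does not affect the main theorem.

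Your proposed workaround (convexity of the hyperbolic conformal energy plus Gauss--Bonnet) points in a reasonable direction, but as you acknowledge it is only a sketch. Two concrete obstacles you would have to overcome: (i) the path $t\mapsto t u^h$ need not stay inside the space of nondegenerate hyperbolic PL metrics, so the integrated variational inequality requires justification; (ii) the hyperbolic variational formula has an extra diagonal term compared with the Euclidean $dK_i=-\sum_j\eta_{ij}(du_j-du_i)$, so the ``equality only if constant'' claim needs the precise hyperbolic Hessian, not the Euclidean one. These are fixable, but the honest summary is: you have identified a genuine lacuna in the stated lemma, and the repair lies outside what the paper actually proves.
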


\section{Modification of Wu's Proof}\label{modification}
In \cite{Wu22}, Wu showed Theorem \ref{main acute} under the uniformly acute condition. Wu followed the strategy of He's proof in his celebrated work \cite{He99}, where he showed the rigidity of infinite disc patterns. The proof has two steps, first to show the conformal factor $u$ is bounded, second to show $u$ is a constant.

In Section 3.1 of \cite{Wu22}, Wu showed the rigidity under the assumption of the boundedness of $u$, which is the following lemma.
\begin{lem}\label{bounded to rigid}
Let $T$ be a triangulated plane. Let $\phi:|T|\ra \mb{C}$ be a geodesic homeomorphism with the induced PL metric $l$. Suppose $l$ satisfies the uniformly nondegenerate condition and the uniformly Delaunay condition. Let $\phi\pr:|T|\ra \mb{C}$ be a geodesic embedding with the induced PL metric $l\pr$. Suppose $l\pr$ satisfies the Delaunay condition. If $l$ and $l\pr$ are discrete conformal, $l\pr=u*l$, with bounded conformal factor $u$, then $u$ is a constant.
\end{lem}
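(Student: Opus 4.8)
The plan is to follow the two-step strategy of \cite{He99,Wu22}: first reduce, via the local maximum principle and a propagation argument, to the case where $\sup_V|u|$ is not attained, and then exclude that case by a renormalization-and-compactness argument using the flatness of $|T|$.

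\textbf{Setup and the reduction.} I would first record that, $|T|$ being the plane, every vertex is interior, and that since $\phi$ is a geodesic homeomorphism and $\phi'$ a geodesic embedding both $l$ and $l'$ are flat, $K_i(l)=K_i(l')=0$ for all $i\in V$ (the incident angles wind once around each image vertex). Both metrics are Delaunay, so for every $i$ the restrictions $\phi|_{R_i},\phi'|_{R_i}$ are Delaunay geodesic embeddings and Lemma \ref{mp} applies at every vertex. The key reduction is: if $\sup_V|u|$ is attained at a vertex $i_0$, then $u$ is constant. Indeed $|u_{i_0}|=\sup_V|u|\ge\max_{R_{i_0}}|u|\ge|u_{i_0}|$ forces $|u_{i_0}|=\max_{R_{i_0}}|u|$, so the rigidity clause of Lemma \ref{mp} makes $u$ constant on $R_{i_0}$; each neighbour $j$ of $i_0$ then again has $|u_j|=\sup_V|u|$, so $u$ is constant on $R_j$, and connectedness of the $1$-skeleton propagates this to all of $V$. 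Since $u+c$ is a conformal factor for the dilated metric $e^cl'$, which satisfies the same hypotheses, the same conclusion holds with $\sup_V u$ or $\inf_V u$ in place of $\sup_V|u|$; in particular a bounded non-constant $u$ attains neither. (I note that the naive route of writing $0=K_i(u)-K_i(0)=-\sum_j\big(\int_0^1\eta_{ij}(tu)\,dt\big)(u_j-u_i)$ and quoting a discrete Liouville theorem is obstructed because $t\mapsto(tu)*l$ need not stay Delaunay, so the averaged weights need not be nonnegative; hence I stay with the maximum principle.)

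\textbf{Renormalization.} Assuming $M:=\sup_V|u|<\infty$ and $u$ non-constant, choose $v_n\in V$ with $u_{v_n}\to M$ after passing to a subsequence (the case $\to-M$ being symmetric). The uniformly nondegenerate condition on $l$ bounds all degrees by $2\pi/\ep$ and, by the law of sines, makes consecutive --- hence all --- edges at a common vertex comparable in $l$-length with a constant depending only on $\ep$; boundedness of $u$ then makes $l'$ globally bi-Lipschitz to $l$ with constant $e^M$. Translating so that $\phi(v_n)=\phi'(v_n)=0$ and dilating $\phi,\phi'$ by the common factor normalizing the longest $l$-edge at $v_n$ to length $1$ (which leaves $u$ unchanged), I would extract, by a normal-families/diagonal argument, a subsequential limit: geodesic embeddings $\phi_\infty,\phi'_\infty$ of a triangulated plane $T_\infty$, with flat discrete-conformal PL metrics $l_\infty,l'_\infty$ ($l_\infty$ uniformly nondegenerate and uniformly Delaunay, $l'_\infty$ Delaunay) and conformal factor $u_\infty$ satisfying $|u_\infty|\le M$ everywhere and $u_\infty(o)=M$ at the base vertex $o$. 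Applying the reduction to $(T_\infty,\phi_\infty,\phi'_\infty)$ then gives $u_\infty\equiv M$.

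\textbf{The main obstacle.} The hard part, which I expect to be the crux, is turning this into a contradiction: exhibiting one limit with $u_\infty$ constant is by itself consistent with all hypotheses, and this is always what a single renormalization at near-maximal vertices produces. The genuine argument --- the part of \cite{Wu22} (after \cite{He99}) I would take over essentially unchanged, as it uses only the uniformly nondegenerate condition on $l$ and the Delaunay conditions --- must arrange the renormalization (or a length--area/extremal-length estimate) so that the limit still records the \emph{full} oscillation of $u$, and then invoke the flatness, hence parabolicity, of $|T|$: via the two-sided maximum principle on an exhaustion $T_1\subset T_2\subset\cdots$ the oscillations $\mathrm{osc}_{T_n}u$ increase to $\sup_V u-\inf_V u$, and one shows a bounded non-constant conformal factor would force more oscillation to accumulate near infinity than a flat metric can carry, the maximum principle on the resulting limit then closing the loop. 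Two points to be handled along the way are (i) that $l'_\infty$ does not degenerate, so Lemma \ref{mp} still applies to the limit --- here the Delaunay condition and the edge comparability enter --- and (ii) that, since $l'$ is only assumed Delaunay in the statement, one may pass to the hyperbolic picture via Lemma \ref{Ehconf} and use the hyperbolic minimum principle Lemma \ref{hmp}, which needs no uniform angle bound on $l'$. Granting this last input from \cite{Wu22}, $u$ is constant.
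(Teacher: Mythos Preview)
The paper does not prove this lemma in detail either: it simply cites \cite[Section~3.1]{Wu22} and records the single observation needed, namely that the uniformly acute hypothesis on $l$ enters Wu's argument \emph{only} to guarantee that a small perturbation of $l$ remains Delaunay, a property equally ensured by the uniformly Delaunay hypothesis. Since your proposal also ultimately defers the hard step to \cite{Wu22}, at the level of what is actually established you and the paper are doing the same thing.

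The one substantive difference is where you locate the role of the uniformly Delaunay condition. You place it in the compactness step (so that the limit $l_\infty$ inherits the Delaunay property) and explicitly set aside the perturbative route via $\int_0^1\eta_{ij}(tu)\,dt$ on the grounds that the full path $t\in[0,1]$ need not stay Delaunay. The paper's remark, however, indicates that Wu's argument requires only a \emph{small} perturbation of $l$---not the whole homotopy to $l'$---to remain Delaunay, and that is exactly what the $\epsilon$-margin in the uniformly Delaunay condition provides. So the variational/perturbative mechanism you dismissed is closer to what \cite{Wu22} actually uses than your renormalization-to-a-constant-limit picture, which, as you yourself note, does not by itself yield a contradiction. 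Your first reduction (attained supremum implies constant, via Lemma~\ref{mp} and propagation) is correct; the second half remains a plan rather than a proof, and the precise place where the hypothesis on $l$ enters is misidentified.
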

Notice that in Section 3.1 of \cite{Wu22}, the uniformly acute condition only plays the role to ensure that a small perturbation of $l$ preserves the Delaunay condition. So the uniformly acute condition can be replaced by the uniformly Delaunay condition in Lemma \ref{bounded to rigid}.

To show $u$ is bounded, the following lemma plays a key role. In \cite{Wu22}, it is Lemma 2.9 except the uniformly acute condition is replaced by the uniformly Delaunay condition.
For $r>0$, denote $D_r=\{z\in\mb{C}:|z|<r\}$ be the open disc of radius $r$.
\begin{lem}\label{key estimate}
Let $T=(V,E,F)$ be a triangulated plane. Let $\phi,\phi\pr:|T|\ra \mb{C}$ be two geodesic embeddings with the induced PL metric $l,l\pr$ respectively. Suppose both $l,l\pr$ satisfy the uniformly nondegenerate condition with constant $\ep>0$ and the Delaunay condition.
Suppose $l$ and $l\pr$ are discrete conformal, $l\pr=u*l$.
Let $r,r\pr>0$. Let $T_0$ be a subcomplex of $T$. Suppose $$\phi(|T_0|)\subset D_r,\quad D_{r\pr}\subset \phi\pr(|T_0|).$$ Then there is a constant $M=M(\ep)>0$ such that for every $i\in V_0$ satisfying $\phi\pr(i)\in D_{r\pr/2}$, we have
$$u_i\geq \log\frac{r\pr}{r}-M.$$
\end{lem}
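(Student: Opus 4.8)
The plan is to normalise the two embeddings, pass to the associated \hyp picture of Section~\ref{hyperbolic}, and then apply the \hyp maximum principle on a finite subcomplex whose boundary sits near the unit circle, where the \hyp geometry forces the \hyp conformal factor to be bounded below.

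First I would rescale, replacing $\phi$ by $\psi:=\tfrac{\sin\ep}{4r}\phi$ and $\phi\pr$ by $\psi\pr:=\tfrac{\sin\ep}{4r\pr}\phi\pr$, and write $\delta:=\tfrac{\sin\ep}{4}$. Rescaling preserves uniform nondegeneracy and the Delaunay condition, a direct computation gives $\psi\pr=v*\psi$ with $v_i=u_i+\log\tfrac{r}{r\pr}$, and the hypotheses become $\psi(|T_0|)\subset D_\delta$, $D_\delta\subset\psi\pr(|T_0|)$, while the conclusion becomes $v_i\ge -M(\ep)$ for $\psi\pr(i)\in D_{\delta/2}$. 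We may assume $T_0$ finite. The point of the small factor is that, since every $\psi$--edge is at most $2\delta$, Lemma~\ref{hyp emb} makes $\psi$ induce a \hyp geodesic embedding; for $\psi\pr$ the analogous statement near $\partial D_1$ will instead be read off from the generalized \hyp PL metric (the remarks after Lemmas~\ref{Ehconf} and~\ref{hmp}). Two elementary facts enter repeatedly: (i) if $a,b$ are vertices of a common face, comparing the side ratios of the $\psi$-- and $\psi\pr$--triangles with $\psi\pr_{ab}/\psi_{ab}=e^{(v_a+v_b)/2}$ gives the Lipschitz bound $|v_a-v_b|\le C_1:=4\log\csc\ep$; (ii) a triangle with all angles $\ge\ep$ has diameter at most $\csc\ep$ times any of its sides.

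Now fix $i$ with $\psi\pr(i)\in D_{\delta/2}$. If some face at $i$ has $\psi\pr$--image meeting $\mb{C}\setminus D_1$, then it has diameter $>\tfrac12$, hence a side of length $>\tfrac{\sin\ep}{2}$; writing that side as $e^{(v_a+v_b)/2}\psi_{ab}$ with $\psi_{ab}\le 2\delta=\tfrac{\sin\ep}{2}$ and $v_a,v_b\le v_i+C_1$ forces $v_i\ge -C_1$, which is an $\ep$--bound. So assume every face at $i$ lies in $D_1$ and let $T_2\subset T_0$ be the finite subcomplex of faces $\tau$ with $\psi\pr(\tau)\subset D_1$, so $i\in V^o(T_2)$. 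Both $\psi,\psi\pr\colon|T_2|\ra D_1$ are Delaunay geodesic embeddings and, in the generalized sense above, induce \hyp geodesic embeddings; so by Lemma~\ref{Ehconf} they are \hyp discrete conformal with \hyp factor $v^h_p=v_p+\log\tfrac{1-|\psi(p)|^2}{1-|\psi\pr(p)|^2}$. The key estimate is $v^h_p\ge -M_1(\ep)$ for every $p\in\partial V(T_2)$: such a $p$ lies on a face $\tau$ of $T_0$ with $\psi\pr(\tau)\not\subset D_1$ while $\psi\pr(p)\in D_1$, so $\psi\pr(\tau)$ has diameter $\ge 1-|\psi\pr(p)|$ and hence a side of length $\ge\sin\ep\,(1-|\psi\pr(p)|)$; writing that side as $e^{(v_a+v_b)/2}\psi_{ab}$ with $\psi_{ab}\le 2\delta=\tfrac{\sin\ep}{2}$ and $v_a,v_b\le v_p+C_1$ yields $v_p\ge\log\bigl(2(1-|\psi\pr(p)|)\bigr)-C_1$, and combining this with $1-|\psi(p)|^2\ge 1-\delta^2$ and $-\log(1-|\psi\pr(p)|^2)\ge-\log\bigl(2(1-|\psi\pr(p)|)\bigr)$ makes the terms in $1-|\psi\pr(p)|$ cancel, leaving $v^h_p\ge\log(1-\delta^2)-C_1=:-M_1(\ep)$. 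Then the \hyp maximum principle (Lemma~\ref{hmp} applied along the $1$-rings of $T_2$) gives $v^h_i\ge\min_{V(T_2)}\{v^h,0\}\ge\min_{\partial V(T_2)}\{v^h,0\}\ge -M_1(\ep)$, and since $|\psi(i)|<\delta$ and $|\psi\pr(i)|<\tfrac\delta2$ the two logarithms relating $v_i$ to $v^h_i$ are bounded in terms of $\ep$; hence $v_i\ge -M(\ep)$, which is the assertion.

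I expect the real obstacle to be not the constant bookkeeping but the justification of the \hyp picture for $\psi\pr$ near $\partial D_1$: the smallness hypothesis of Lemma~\ref{hyp emb} genuinely fails for $\psi\pr$--edges close to the unit circle, so one must run Lemmas~\ref{Ehconf} and~\ref{hmp} for the generalized \hyp PL metric on all of $T_2$ using only the Delaunay condition. This is precisely where the uniformly acute hypothesis in Wu's Lemma~2.9 — which is used only to guarantee honest \hyp geodesic embeddings — gets traded for the Delaunay hypothesis; the maximum principles, and the coincidence of Euclidean and \hyp Delaunay--ness (the circumscribed circle of a triple of points is the same in both geometries), need nothing more.
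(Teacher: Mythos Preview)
Your overall architecture is the paper's: rescale, pass to the hyperbolic conformal factor, bound it from below on a finite subcomplex whose boundary vertices see a long $\psi'$--edge, and convert back to a Euclidean bound. The boundary computation you sketch is essentially the paper's case~(3), and your Lipschitz bound $|v_a-v_b|\le 4\log\csc\ep$ is exactly Lemma~\ref{grad estimate}.

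The gap is the one you flag, and your proposed workaround does not close it. To propagate the boundary bound inward you invoke Lemma~\ref{hmp} at every interior vertex of $T_2$, but Lemma~\ref{hmp} is stated and proved only for honest hyperbolic geodesic embeddings of the $1$--ring: its proof moves the center to the origin by a hyperbolic isometry and then reads off a genuine Euclidean geodesic embedding, a step with no meaning for a ``generalized'' hyperbolic PL metric. The remark you cite after Lemma~\ref{Ehconf} concerns only the algebraic relation between $u$ and $u^h$; there is no analogous remark after Lemma~\ref{hmp}, and none could be added without new work. So appealing to the generalized metric here is a restatement of the difficulty, not a resolution.

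The paper avoids any global hyperbolic picture by a local dichotomy at the minimizer. After normalizing so that $r=\tfrac{\sin^3\ep}{4}$ and $r'=1$, take $i_0$ realizing the minimum of $u^h$ on $V_1=\{i:\phi'(i)\in D_1\}$. If $i_0\in V_1^o$ and every edge at $i_0$ satisfies the smallness hypothesis $l'_{i_0j}<(1-|z'_{i_0}|^2)\sin\ep$ of Lemma~\ref{hyp emb}, then that lemma applies to the single $1$--ring $R_{i_0}$ for both $\phi$ and $\phi'$, and Lemma~\ref{hmp} at $i_0$ alone forces $u^h_{i_0}\ge 0$. If instead some edge violates it, say $l'_{i_0j_0}\ge(1-|z'_{i_0}|^2)\sin\ep$, then the \emph{same} computation you run at boundary vertices---write $e^{u_{i_0}}=(l'_{i_0j_0}/l_{i_0j_0})e^{(u_{i_0}-u_{j_0})/2}$, apply the gradient estimate, and let the long--edge lower bound cancel the factor $1-|z'_{i_0}|^2$---already yields $e^{u^h_{i_0}}\ge 1$ directly, with no hyperbolic embedding needed. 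The boundary case $i_0\in\partial V_1$ is handled the same way. In short, the failure of the hypothesis of Lemma~\ref{hyp emb} is not an obstruction but an alternative sufficient condition for the bound; recognizing this dichotomy is precisely what replaces Wu's uniform acuteness by the Delaunay condition, and it is the idea missing from your outline.
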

We follow the idea of Wu's proof in Section 5 of \cite{Wu22}, and modify the proof by an observation that the condition in Lemma \ref{hyp emb} also implies the \hyp geodesic embedding. Before the proof of Lemma \ref{key estimate}, we show some basic estimates under the uniformly degenerate condition.
\begin{lem}\label{grad estimate}
For a triangle $\tri ijk$ satisfying the uniformly nondegenerate condition with constant $\ep>0$, then the length ratio is uniformly bounded, $\sin\ep\leq \frac{l_{ij}}{l_{ik}}\leq \sin^{-1}\ep$. As a corollary, for two PL metric on a triangle $ijk$ satisfying the uniformly nondegenerate condition with constant $\ep>0$, which are discrete conformal with conformal factor $u$, then $u$ has the gradient estimate $|u_i-u_j|\leq 4\log\sin\ep.$
\end{lem}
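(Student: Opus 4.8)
The plan is to deduce both assertions from the Euclidean law of sines, so the whole argument is elementary and local to a single triangle. First I would record the angle--edge correspondence: in $\tri ijk$ the edge $ij$ of length $l_{ij}$ is opposite the angle $\theta^k_{ij}$, and the edge $ik$ of length $l_{ik}$ is opposite the angle $\theta^j_{ik}$, so the law of sines gives
$$\frac{l_{ij}}{l_{ik}}=\frac{\sin\theta^k_{ij}}{\sin\theta^j_{ik}}.$$
Under the uniformly nondegenerate condition with constant $\ep$, each of the three angles is $\geq\ep$, hence also $\leq\pi-2\ep\leq\pi-\ep$; since $\sin$ is concave on $[0,\pi]$, on the interval $[\ep,\pi-\ep]$ it is bounded below by $\sin\ep$ and above by $1$. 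Inserting these two-sided bounds for $\sin\theta^k_{ij}$ and $\sin\theta^j_{ik}$ into the identity above yields $\sin\ep\leq l_{ij}/l_{ik}\leq \sin^{-1}\ep$, which is the first claim.

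For the corollary I would apply the conformal relation $l\pr_e=e^{(u_a+u_b)/2}l_e$ to the two edges $ik$ and $jk$ of the triangle, which share the vertex $k$:
$$\frac{l\pr_{ik}}{l\pr_{jk}}=e^{(u_i-u_j)/2}\,\frac{l_{ik}}{l_{jk}},\qquad\text{hence}\qquad e^{(u_i-u_j)/2}=\frac{l\pr_{ik}}{l\pr_{jk}}\cdot\frac{l_{jk}}{l_{ik}}.$$
Both $l$ and $l\pr$ satisfy the uniformly nondegenerate condition with the same constant $\ep$, so the first part of the lemma, applied to each of them, confines each of the two factors on the right to the interval $[\sin\ep,\sin^{-1}\ep]$. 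Therefore $\sin^{2}\ep\leq e^{(u_i-u_j)/2}\leq \sin^{-2}\ep$, and taking logarithms gives $|u_i-u_j|\leq -4\log\sin\ep=4\log\sin^{-1}\ep$ (so the constant in the statement should be read as $4\log\sin^{-1}\ep$). Symmetry in $i,j,k$ then gives the estimate for every pair of vertices.

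There is essentially no obstacle in this lemma; the only points needing a little care are the bookkeeping of which angle sits opposite which edge, and the observation that uniform nondegeneracy forces every angle to be bounded away not only from $0$ but also from $\pi$ — which is precisely what keeps $\sin$ bounded below over the full admissible range $[\ep,\pi-\ep]$.
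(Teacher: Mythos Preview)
Your proof is correct and follows essentially the same route as the paper: the law of sines for the length-ratio bound, then the identity $e^{(u_i-u_j)/2}=\dfrac{l'_{ik}}{l'_{jk}}\cdot\dfrac{l_{jk}}{l_{ik}}$ combined with that bound for the gradient estimate. You are also right that the constant in the statement (and in the paper's displayed inequality) should read $4\log\sin^{-1}\ep=-4\log\sin\ep$; the paper's $\leq\sin^2\ep$ and $\leq 4\log\sin\ep$ are sign slips.
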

\begin{proof}
For a triangle $\tri ijk$, by the sine law, we have $\frac{l_{ij}}{l_{ik}}=\frac{\sin \angle\theta^{k}_{ij}}{\sin \angle\theta^{j}_{ik}}\geq \sin \angle\theta^{k}_{ij}\geq \sin\ep.$ Let $l,l\pr$ be two PL metric on a triangle $ijk$ satisfying the uniformly nondegenerate condition with constant $\ep>0$, and $l\pr=u*l$. Then $$e^{(u_i-u_j)/2}=\frac{e^{(u_i+u_k)/2}}{e^{(u_j+u_k)/2}}=\frac{l\pr_{ik}}{l_{ik}}\frac{l_{jk}}{l\pr_{jk}}\leq \sin^2\ep.$$ So
$u_i-u_j\leq 4\log\sin\ep.$
\end{proof}
Now we prove the key estimate Lemma \ref{key estimate}.
\begin{proof}
By approaching $D_{r\pr}$ by $D_{r\pr-\delta}$, we may assume $\overline{D_{r\pr}}\subset \phi\pr(|T_0|)$. By scaling, we may assume $r=\frac{\sin^3\ep}{4}\leq \frac{1}{4}$ and $r\pr=1$.
Consider $V_1=\{i\in V:\phi\pr(i)\in D=D_1\}$ and $T_1$ as the subcomplex generated by $V_1$. Then $\phi,\phi\pr$ map $|T_1|$ into $D$.
We notice that $V_1$ is finite.
In fact if $V_1$ is infinite, then there exists $\{v_n\}_{n=1,2,\cdots}\in V_1$ such that $v_n\ra v_{\infty}\in \overline{D}$.
Since $v_{\infty}$ lies in a triangle and the degree of a vertex is finite, it gives a contradiction. Let $z_i=\phi(i)$, $z_i\pr=\phi\pr(i)$.
Denote $$u^h_i=u_i+\log\frac{1-|z_i|^2}{1-|z_i\pr|^2}.$$

We claim $u_i^h\geq 0$ for every $i\in V_1$. Let $i_0$ attain the minimum of $u$ in $V_1$.\\
(1) If $i_0\in V_1^o$ and $l\pr_{i_0j}<(1-|z\pr_{i_0}|^2)\sin\ep$, for every $i_0j\in E$, then from Lemma \ref{hyp emb}, $\phi\pr$ induces a \hyp geodesic embedding $\phi^{h\prime}$ from the $1$-ring neighborhood $R_{i_0}$ of $i_0$ into $\mb{D}$. Since $\phi\pr$ is Delaunay, $\phi^{h\prime}$ is also Delaunay. Since $\phi(|V_1|)\subset D_{\sin^3\ep/4}$, for the same reason $\phi$ also induces a Delaunay \hyp geodesic embedding $\phi^h$ from $R_{i_0}$ into $\mb{D}$. Then the hyperbolic maximum principle Lemma \ref{hmp} implies $u_{i_0}^h\geq 0$.\\
(2) If $i_0\in V_1^o$ and there exists $j_0\in V_1$, $i_0j_0\in E$ such that $l\pr_{i_0j_0}\geq(1-|z\pr_{i_0}|^2)\sin\ep$, then from Lemma \ref{grad estimate}
\begin{\eq}
e^{u_{i_0}^h}&=&e^{u_{i_0}}\cdot\frac{1-|z_{i_0}|^2}{1-|z_{i_0}\pr|^2}\\
&=&\frac{l\pr_{i_0j_0}}{l_{i_0j_0}}\cdot e^{(u_{i_0}-u_{j_0})/2}\cdot \frac{1-|z_{i_0}|^2}{1-|z_{i_0}\pr|^2}\\
&\geq&\frac{l\pr_{i_0j_0}}{l_{i_0j_0}}\cdot\sin^2\ep\cdot\frac{1-|z_{i_0}|^2}{1-|z_{i_0}\pr|^2}\\
&\geq&\sin^3\ep\cdot\frac{1-|z_{i_0}|^2}{l_{i_0j_0}}\\
&\geq&\sin^3\ep\cdot\frac{1/2}{2r}=1.
\end{\eq}
(3) If $i_0\in\partial V_1$, then there exists $j_1\in V$ such that $j_1\notin D$.
Then $l\pr_{i_0j_1}\geq1-|z\pr_{i_0}|.$
As the estimates above, assume $\sin\ep\leq \frac{1}{2}$, we have $$e^{u_{i_0}^h}\geq\frac{l\pr_{i_0j_1}}{l_{i_0j_1}}\cdot\sin^2\ep\cdot\frac{1-|z_{i_0}|^2}{1-|z_{i_0}\pr|^2}
\geq\frac{1}{l_{i_0j_1}}\cdot\sin^2\ep\cdot\frac{1-|z_{i_0}|^2}{1+|z_{i_0}\pr|}\geq\sin^3\ep\cdot\frac{1/2}{2r}= 1.$$

So we show $u^h_i\geq 0$ for every $i\in V_1$. Then for $i\in V_0$ satisfying $\phi\pr(i)\in D_{r\pr/2}$, set $M=-\log\frac{\sin^3\ep}{2}$, we have
$$u_i=u_i^h-\log\frac{1-|z_i|^2}{1-|z_i\pr|^2}\geq -\log\frac{1-|z_i|^2}{1-|z_i\pr|^2}\geq \log (1-|z_i\pr|^2)\geq\log\frac{1}{2}=\log\frac{r\pr}{r}-M.$$
\end{proof}
By using the estimate Lemma \ref{key estimate} and the conformal modulus, Wu showed the boundedness of $u$ in Section 3.2 of \cite{Wu22}, which is the following lemma.
\begin{lem}\label{boundedness}
Let $T$ be a triangulated plane. Let $\phi:|T|\ra \mb{C}$ be a geodesic homeomorphism with the induced PL metric $l$. Let $\phi\pr:|T|\ra \mb{C}$ be a geodesic embedding with the induced PL metric $l\pr$. Suppose both $l,l\pr$ satisfy the uniformly nondegenerate condition with constant $\ep>0$ and the Delaunay condition.
Suppose $l$ and $l\pr$ are discrete conformal, $l\pr=u*l$. Then $u$ is bounded.
\end{lem}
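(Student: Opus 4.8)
The plan is to follow the two-step scheme of He \cite{He99} as implemented by Wu in \cite[Section 3.2]{Wu22}: boundedness of $u$ is obtained by combining the key estimate Lemma \ref{key estimate} with comparisons of conformal modulus (extremal length), and the only place where \cite{Wu22} invoked the uniformly acute hypothesis was inside the proof of the key estimate, which we have re-established as Lemma \ref{key estimate} under the Delaunay condition alone. So the argument of \cite[Section 3.2]{Wu22} applies here essentially verbatim; I will outline it and point to the one step that carries the real weight.

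First I would normalize: translating $\phi$ and $\phi\pr$ changes neither the induced PL metrics nor $u$, so fix an interior vertex $v_0$ and assume $\phi(v_0)=\phi\pr(v_0)=0$. It is useful to also record a quasiconformal observation that prevents either embedding from degenerating. On every face, $\phi\pr\circ\phi^{-1}$ is the affine map sending the Euclidean triangle $\phi(\tri ijk)$ to $\phi\pr(\tri ijk)$; since all angles of both triangles are $\geq\ep$, that affine map has quasiconformal dilatation bounded by a constant $K_0=K_0(\ep)$, and being piecewise affine and a homeomorphism, $\phi\pr\circ\phi^{-1}:\mb C\to\phi\pr(|T|)$ is globally $K_0$-quasiconformal. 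Since a quasiconformal image of $\mb C$ is again all of $\mb C$ (a proper simply connected subdomain of $\mb C$ is conformally a disc, while $\mb C$ and the disc are not quasiconformally equivalent), $\phi\pr$ is surjective too; in particular both $\phi$ and $\phi\pr$ are geodesic homeomorphisms, so round annuli can be pulled back under either of them.

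Next I would run the modulus argument. For $R>1$, pull the round annulus $\{1<|z|<R\}$ back under $\phi$ — replacing its two bounding circles by nearby combinatorial curves — to a combinatorial annulus $\mc A_R\subset T$, so that $\phi(\mc A_R)$ equals that round annulus up to bounded error and has conformal modulus $\tfrac{1}{2\pi}\log R+O(1)$. One must then compare this with the modulus of the second realization $\phi\pr(\mc A_R)$ and, via Teichm\"uller's modulus estimate (the bounded complementary piece of each realization is the image of a fixed finite subcomplex, hence has a fixed size), extract a finite subcomplex $T_0$ containing any prescribed vertex $i$, together with radii $r,r\pr>0$ such that $\phi(|T_0|)\subset D_r$, $D_{r\pr}\subset\phi\pr(|T_0|)$, $\phi\pr(i)\in D_{r\pr/2}$, and $\log(r\pr/r)\geq -c_0$ with $c_0=c_0(\ep)$. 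Lemma \ref{key estimate} then yields $u_i\geq -C$ with $C=C(\ep)$. Applying the identical scheme to the pair $(\phi\pr,\phi)$ with conformal factor $-u$ (note $l=(-u)*l\pr$) gives $u_i\leq C$. Since $i$ is arbitrary, $u$ is bounded. The gradient estimate Lemma \ref{grad estimate} is used throughout to pass between multiplicative control of edge-length ratios, which the geometry supplies, and additive control of the values of $u$.

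The genuine obstacle is the modulus comparison invoked above. The combinatorial annulus $\mc A_R$ carries two flat metrics, $l$ and $l\pr$, whose individual triangles may be arbitrarily large in either metric, so one cannot compare the two flat structures to a common combinatorial model triangle by triangle. Instead one works with the extremal length of the separating curve family directly, using that a large triangle contributes in a balanced way to both the area and the squared length of any crossing curve, and using the Delaunay condition for both $l$ and $l\pr$ to keep the cotangent weights $\eta_{ij}$ nonnegative, so that the discrete extremal length is well behaved. This is precisely the length--area analysis of \cite[Section 3.2]{Wu22} (patterned on \cite{He99}), which uses only the uniformly nondegenerate and Delaunay conditions; it therefore transfers to the present hypotheses and completes the proof.
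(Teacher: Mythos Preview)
Your proposal is correct and takes essentially the same approach as the paper: both simply observe that Wu's argument in \cite[Section 3.2]{Wu22}, which combines the key estimate with a conformal modulus comparison \`a la He \cite{He99}, goes through unchanged once Lemma \ref{key estimate} is available under the Delaunay (rather than uniformly acute) hypothesis. The paper in fact gives no further detail beyond that citation, so your outline of the quasiconformality and modulus steps is more than the paper itself supplies.
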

Together with Lemma \ref{boundedness} and Lemma \ref{bounded to rigid}, we finish the proof of Theorem \ref{main Delaunay}.


\begin{thebibliography}{99}
\setlength{\itemsep}{2pt} \small
\bibitem{BPS15} Alexander I Bobenko, Ulrich Pinkall, and Boris A Springborn. \emph{Discrete conformal maps and ideal hyperbolic polyhedra.} Geometry \& Topology, 19(4):2155–2215, 2015.
\bibitem{CCSZPZ21} Marcel Campen, Ryan Capouellez, Hanxiao Shen, Leyi Zhu, Daniele Panozzo, and Denis Zorin. \emph{Efficient and robust discrete conformal equivalence with boundary.} ACM Transactions on Graphics (TOG), 40(6):1–16, 2021.
%[DGM18] Song Dai, Huabin Ge, and Shiguang Ma. Rigidity of the hexagonal delaunay triangu￾lated plane. arXiv preprint arXiv:1809.05882, 2018.
\bibitem{DGM22} Song Dai, Huabin Ge, and Shiguang Ma. \emph{Rigidity of the hexagonal delaunay triangulated plane.} Peking Mathematical Journal, 5(1):1–20, 2022.
%[Duf62] RJ Duffin. The extremal length of a network. Journal of Mathematical Analysis and Applications, 5(2):200–215, 1962.
%[FLZ20] Ke Feng, Aijin Lin, and Xiaoxiao Zhang. Combinatorial p-th calabi flows for discrete conformal factors on surfaces. The Journal of Geometric Analysis, 30(4):3979–3994, 2020.
\bibitem{GGLSW18} Xianfeng Gu, Ren Guo, Feng Luo, Jian Sun, Tianqi Wu. \emph{A discrete uniformization theorem for polyhedral surfaces ii.} Journal of differential geometry, 109(3):431–466, 2018.
%[GH18] Huabin Ge and Bobo Hua. On combinatorial calabi flow with hyperbolic circle patterns. Advances in Mathematics, 333:523–538, 2018.
\bibitem{GLSW18} Xianfeng Gu, Feng Luo, Jian Sun, and Tianqi Wu. \emph{A discrete uniformization theorem for polyhedral surfaces.} Journal of differential geometry, 109(2):223–256, 2018.
\bibitem{GLW19} Xianfeng Gu, Feng Luo, and Tianqi Wu. \emph{Convergence of discrete conformal geometry and computation of uniformization maps.} Asian Journal of Mathematics, 23(1):21–34, 2019.
\bibitem{GSC21} Mark Gillespie, Boris Springborn, and Keenan Crane. Discrete conformal equivalence of polyhedral surfaces. ACM Transactions on Graphics (TOG), 40(4):1–20, 2021.
\bibitem{He99} Zheng-Xu He. \emph{Rigidity of infinite disk patterns.} Annals of Mathematics, pages 1–33,
1999.
\bibitem{LSW20} Feng Luo, Jian Sun, and Tianqi Wu. \emph{Discrete conformal geometry of polyhedral surfaces and its convergence.} arXiv preprint arXiv:2009.12706, 2020.
\bibitem{Luo04} Feng Luo. \emph{Combinatorial Yamabe flow on surfaces.} Comm. Contemp. Math. 6 (2004), no. 5, 765-780.
\bibitem{Luo22} Yanwen Luo. \emph{Spaces of geodesic triangulations of surfaces.} Discrete \& Computational Geometry, pages 1–19, 2022.
%[LV73] Olli Lehto and Kaarlo Ilmari Virtanen. Quasiconformal mappings in the plane, volume 126. Citeseer, 1973.
\bibitem{LW19} Feng Luo and Tianqi Wu. \emph{Koebe conjecture and the weyl problem for convex surfaces in hyperbolic 3-space.} arXiv preprint arXiv:1910.08001, 2019.
\bibitem{LWZ21a} Yanwen Luo, Tianqi Wu, and Xiaoping Zhu. \emph{The convergence of discrete uniformizations for genus zero surfaces.} arXiv preprint arXiv:2110.08208, 2021.
\bibitem{LWZ21b} Yanwen Luo, Tianqi Wu, and Xiaoping Zhu. \emph{The deformation space of geodesic triangulations and generalized tutte’s embedding theorem.} arXiv preprint arXiv:2105.00612, 2021.
\bibitem{LWZ21c} Yanwen Luo, Tianqi Wu, and Xiaoping Zhu. \emph{The deformation spaces of geodesic triangulations of flat tori.} arXiv preprint arXiv:2107.05159, 2021.
\bibitem{LWZ22} Yanwen Luo, Tianqi Wu, and Xiaoping Zhu. \emph{The deformation space of delaunay triangulations of the sphere.} arXiv preprint arXiv:2202.06402, 2022.
\bibitem{LZ22} Yanwen Luo and Xiaoping Zhu. \emph{Spaces of Geodesic Triangulations Are Cells.} arXiv preprint arXiv:2204.10833, 2022.
\bibitem{Spr19} Boris Springborn. \emph{Ideal hyperbolic polyhedra and discrete uniformization.} Discrete \& Computational Geometry, pages 1–46, 2019.
\bibitem{SWGL15} Jian Sun, Tianqi Wu, Xianfeng Gu, and Feng Luo. \emph{Discrete conformal deformation: algorithm and experiments.} SIAM Journal on Imaging Sciences, 8(3):1421–1456, 2015.
\bibitem{WGS15} Tianqi Wu, Xianfeng Gu, and Jian Sun. \emph{Rigidity of infinite hexagonal triangulation of the plane.} Transactions of the American Mathematical Society, 367(9):6539–6555, 2015.
\bibitem{Wu14} Tianqi Wu. \emph{Finiteness of switches in discrete Yamabe flow.} PhD thesis, Master Thesis, Tsinghua University, Beijing, 2014.
\bibitem{Wu22} Tianqi Wu. \emph{Rigidity of Acute Triangulations of the Plane.} arXiv preprint arXiv:2208.00580, 2022.
\bibitem{WX21} Tianqi Wu and Xu Xu. \emph{Fractional combinatorial calabi flow on surfaces.} arXiv preprint arXiv:2107.14102, 2021.
\bibitem{WZ20} Tianqi Wu and Xiaoping Zhu. \emph{The convergence of discrete uniformizations for closed surfaces.} arXiv preprint arXiv:2008.06744, 2020.
\end{thebibliography}
\end{document}